\newcommand{\n}{\mathfrak{n} }
\newcommand{\m}{\mathfrak{m} }
\newcommand{\q}{\mathfrak{q} }
\newcommand{\p}{\mathfrak{p} }
\providecommand{\D}{{\mathcal D}}
\newcommand{\Z}{\mathbb{Z} }
\newcommand{\N}{\mathbb{N} }
\newcommand{\im}{\operatorname{image}}
\newcommand{\Ass}{\operatorname{Ass}}
\newcommand{\chars}{\operatorname{char}}
\newcommand{\ann}{\operatorname{ann}}
\newcommand{\hgt}{\operatorname{height}}
\newcommand{\supp}{\operatorname{supp}}
\newcommand{\MaxSpec}{\operatorname{MaxSpec}}
\newcommand{\Spec}{\operatorname{Spec}}
\newcommand{\injdim}{\operatorname{injdim}}
\newcommand{\Hom}{\operatorname{Hom}}
\newcommand{\Ext}{\operatorname{Ext}}
\newcommand{\Tor}{\operatorname{Tor}}
\theoremstyle{plain}
\newtheorem{theorem}{Theorem}[section]
\newtheorem{corollary}[theorem]{Corollary}
\newtheorem{lemma}[theorem]{Lemma}
\newtheorem{proposition}[theorem]{Proposition}
\newtheorem{question}[theorem]{Question}
\theoremstyle{definition}
\newtheorem{definition}[theorem]{Definition}
\newtheorem{remark}[theorem]{Remark}
\newtheorem{example}[theorem]{Example}
\newtheorem{note}[theorem]{Note}
\theoremstyle{remark}
\newtheorem*{example*}{\it Example}
\newtheorem*{note*}{\it Note}
\newtheorem*{claim*}{\it Claim}
\newtheorem*{case*}{\it Case}
\title[Graded components of local cohomology modules]{Graded components of local cohomology modules supported on $\mathfrak{C}$-monomial ideals}
\date{\today}
\thanks{The second author is grateful to the Infosys Foundation for providing partial financial support.}
\thanks{{\it Key words}: Local cohomology modules, monomial ideals, associated primes, Bass numbers.}
\author{Tony J. Puthenpurakal}
\address{Department of Mathematics, Indian Institute of Technology Bombay,
% Powai,
Mumbai 400076, India}
\email{\href{mailto:tputhen@math.iitb.ac.in}{tputhen@math.iitb.ac.in}}
\author{Sudeshna Roy}
\address{Department of Mathematics, Chennai Mathematical Institute,
% Siruseri,
Kelambakkam 603103, India}
\email{\href{mailto:sudeshnaroy.11@gmail.com}{sudeshnaroy.11@gmail.com}}
\begin{document}

\begin{abstract}
Let $A$ be a Dedekind domain of characteristic zero such that its localization at every maximal ideal has mixed characteristic with finite residue field. Let $R=A[X_1,\ldots, X_n]$ be a polynomial ring and $I=(a_1U_1, \ldots, a_c U_c)\subseteq R$ an ideal, where $a_j \in A$ (not necessarily units) and $U_j$'s are monomials in $X_1, \ldots, X_n$. We call such an ideal $I$ as a $\mathfrak{C}$-monomial ideal. Consider the standard multigrading on $R$. We produce a structure theorem for the multigraded components of the local cohomology modules $H^i_I(R)$ for $i \geq 0$. We further analyze the torsion part and the torsion-free part of these components. We show that if $A$ is a PID then each component can be written as a direct sum of its torsion part and torsion-free part. As a consequence, we obtain that their Bass numbers are finite.
\end{abstract}
\maketitle

\section{introduction}
Since its introduction by A. Grothendieck in 1961, local cohomology modules have become a fundamental tool in commutative algebra and algebraic geometry. The nonfinite generation of nonzero local cohomology modules makes them difficult to study. A great deal of research has been done by commutative algebraists and algebraic geometers to decode the structure and behaviour of these modules. Some finiteness properties over regular rings were observed by C. Huneke and R. Y. Sharp \cite{HunSha} in prime characteristic using the Frobenius action on the local cohomology modules. G. Lyubeznik \cite{Lyu93} obtained all but one finiteness results over any regular ring containing a field of characteristic zero based on his breakthrough observation that local cohomology modules on complete regular local rings containing a field of characteristic zero are enriched with an additional structure of a ﬁnitely generated module over the ring of differential operators. Over the last few years, the D-module structure of local cohomology modules turns up as a major tool in their study over a ring containing a field. Some developments have been made for unramified and particular ramified regular local rings of mixed characteristic by using the theory of D-modules, see \cite{Lyu2000}, \cite{Bet13}, \cite{BBLSZ}. Unfortunately local cohomology modules may fail to have similar finiteness properties over singular rings. The local cohomology modules $H^i_I(R)$ supported on monomial ideals $I$ in a polynomial ring $R:=k[X_1, \ldots,X_n]$ over a field $k$, gained interest of several researchers mainly for its connection with toric varieties. Besides, such modules are relatively simpler to handle and helps to construct examples or nonexamples. The modules $H^i_I(R)$ have natural $\Z^n$-graded structures and can be approached using combinatorial techniques. However, there is a dictionary between the D-module and the $\Z^n$-graded structure of $H^i_I(R)$, see \cite{Mon_LC}. It is well-known that $H^i_I(R)_{\underline{u}}$ are finite dimensional $k$-vector spaces for all $\underline{u} \in \Z^n$. Readers are encouraged to look through the survey article \cite{Mon_LC} for an highlight of recent progress in studying such $H^i_I(R)$.

Let $R = \bigoplus_{u \geq 0} R_u$ be a standard graded Noetherian ring, and $R_+ = \bigoplus_{u > 0} R_u$ be its irrelevant ideal. Let $N = \bigoplus_{u \in \Z} N_u$ be a finitely generated graded $R$-module. It is long known that $H^i_{R_+}(N)_u$ is finitely generated for all $u \in \Z$ and vanishes for all large values of $u$. Such modules are extremely useful in the study of Hilbert coefficients, Rees algebras, associated graded rings, and several others. It is worthwhile to remark that components of a graded local cohomology module rarely carry similar properties when its support is an arbitrary homogeneous ideal.  The first author \cite{TP2} did a comprehensive study of the components of $H^i_I(R)$ where $I$ is an arbitrary homogeneous ideal of a standard graded polynomial ring $R = R_0[X_1, \ldots,X_n]$ over a regular ring $R_0$ containing a field of characteristic zero. He tried out some questions related to the asymptotic behavior of graded local cohomology modules raised by M. Brodmann in \cite{Brod1} and proved that certain numerical invariants associated to $H^i_I(R)$ are asymptotically stable. In \cite{TPSR1}, \cite{TPSR2}, we established some of these results under relaxed assumptions on the base ring $R_0$. However, these results are not valid in general even for local cohomology modules supported on irrelevant ideals.

Let $A$ be a Dedekind domain of characteristic zero such that its localization at every maximal ideal has mixed characteristic with finite residue field. Let $K$ be its field of fractions. Notice that the ring of integers in a number field is an example of such ring. In \cite[Theorem 4.3]{BBLSZ}, B. Bhatt et al. proved that the set of associated primes of $H^i_I(R)$ as an $A$-module is finite. In light of the above discussions, we are now interested in analyzing the structure of multigraded components of local cohomology modules supported on monomial ideals $I$ in a
% standard mutigraded
polynomial ring $R=R_{\underline{0}}[X_1, \ldots,X_n]$ with $R_{\underline{0}}=A$ and $\deg X_j=e_j \in \N^n$ for $j=1, \ldots, n$. But, in this case $H^i_I(R)_{\underline{u}}$ are finitely generated $A$-modules for all $\underline{u} \in \Z^n$ as we see in Proposition \ref{fin-gen}. Therefore, we turn our attention to a slightly more general ideal.

\begin{definition}\label{C_monomial}
We say that an ideal in $R$ is a \emph{$\mathfrak{C}$-monomial ideal} if it can be generated by some elements of the form $aU$ where $a \in A$ (possibly nonunit) and $U=X_1^{\xi_{1}} \cdots X_n^{\xi_{n}}$ for some $\xi_{j} \neq 0$.
\end{definition}

We point out that a $\mathfrak{C}$-monomial ideal is the "
usual monomial ideal'' (see \cite[Deﬁnition 1.1.1]{HerHib}) if all the coefficients $a$ occurring in a generating set are units.
% In the present case,
If $I$ is a $\mathfrak{C}$-monomial ideal, components of $H^i_I(R)$ are not necessarily finitely generated, see Examples 6.1 and 6.2. Observe that $A$ is locally a one dimensional regular local ring. One of our main achievements in this work is a structure theorem for the graded components of $H^i_I(R)$. To study the torsion-free part of $H^i_I(R)_{\underline{u}}$, we look into the module $H^i_I(R) \otimes_{A} K$ in an obvious manner. If we set $S:=K[X_1, \ldots, X_n]$, then $H^i_I(R) \otimes_{A} K \cong H^i_{IS}(S)$ whose structure is mostly known. It is in fact easily seen that $IS \subseteq S$ is a usual monomial ideal. Throughout this article, the study will be carried out under the following setup.

\s {\bf Standard assumption.}\label{sai} Let $A$ be a Dedekind domain of characteristic zero such that for each height one prime ideal $\p$ in $A$, the local ring $A_{\p}$ has mixed characteristic with finite residue field. Suppose that $R=A[X_1, \ldots, X_n]$ is a polynomial ring with $\deg X_j=e_j \in \N^n$. Let $I \subseteq R$ be a $\mathfrak{C}$-monomial ideal
% in $R$
(see Definition \ref{C_monomial}) with a generating set $\{a_1U_1, \ldots, a_c U_c\},$ where $a_j \in A$ and $U_j$'s are monomials in $X_1, \ldots, X_n$. We set $M:=H^i_I(R)=\bigoplus_{\underline{u} \in \Z^n} M_{\underline{u}}$ and put $\mathcal{Z}:=\Ass_A H^i_I(R) \backslash \{0\}$.

\vspace{0.15cm}
Fix $\underline{u} \in \Z^n$. Recall that $\mathcal{Z}$ is a finite set due to \cite[Theorem 4.3]{BBLSZ}. Besides, in a one-dimensional domain any two nonzero prime ideals are coprime. These two facts permit us to express the torsion part of $H^i_I(R)_{\underline{u}}$ as a direct sum of its $\p$-torsion submodules, where $\p \in \mathcal{Z}$. Since $k:=A/\p \cong A_\p/\p A_\p$ is a finite field of prime characteristic by our hypotheses, we are able to utilize the results known over $\overline{R}:=R/\p R \cong k[X_1, \ldots, X_n]$.

In this article, we prove the following.

\begin{theorem}\label{intro}
Assume the hypothesis as in \ref{sai}. Take any prime ideal $\q$ in $A$. We set $T=\widehat{A_\q}$ and put $N=M \otimes_A T=H^i_{IS}(S)$, where $S=T[X_1, \ldots, X_n]$. Let $K$ and $K_\q$ denote the fraction fields of $R$ and $T$, respectively. Fix $\underline{u} \in \Z^n$. Then the following holds.
\begin{enumerate}[\rm (1)]
 \item  Let $t(M_{\underline{u}})$ denote the torsion submodule of $M_{\underline{u}}$. Then $t(M_{\underline{u}})= \oplus_{\p \in \mathcal{Z}} \Gamma_{\p}(M_{\underline{u}})$.

\item For each $\p \in \mathcal{Z}$, there are
% some
finite numbers $\ell(\underline{u}),\beta_j(\underline{u}), t(\underline{u})$ such that
\[\Gamma_\p\left(M_{\underline{u}}\right)=E_A\left(\frac{A}{\p}\right)^{\ell(\underline{u})} \oplus \left(\bigoplus_{j=1}^{t(\underline{u})} \frac{A}{\p^{\beta_j(\underline{u})}A}\right).\]

\item Let $\q$ be a nonzero prime ideal in $A$ such that $a_i \notin \q$ for all $i=1, \ldots, c$. Then $E_A(A/\q)$ cannot appear as a direct summand of $t(M_{\underline{u}})$.

\item Write $\overline{N_{\underline{u}}}:=\frac{N_{\underline{u}}}{\Gamma_\q(N_{\underline{u}})}$. Then
$\overline{N_{\underline{u}}}=T^{a(\underline{u})} \oplus K_\q^{b(\underline{u})}$
for some finite numbers $a(\underline{u}), b(\underline{u}) \geq 0$.

\item $M_{\underline{u}} \otimes_A K \cong K^{\alpha(\underline{u})}$ for some finite $\alpha(\underline{u}) \geq 0$. Moreover, $\{\alpha(\underline{u}) \mid \underline{u} \in \Z^n\}$ is a finite set.

\item $\overline{M_{\underline{u}}}:=\frac{M_{\underline{u}}}{t(M_{\underline{u}})}$ is a flat $A$-module.

\item
\begin{enumerate}[\rm (a)]
 \item The short exact sequence
\begin{equation}\label{eq_ses_intro}
0 \to t\left(M_{\underline{u}}\right) \to M_{\underline{u}} \to \overline{M_{\underline{u}}} \to 0
\end{equation}
 splits
%  globally
if $I$ is a usual monomial ideal or $A$ is a PID.
 \item The induced short exact sequence $0 \to t\left(M_{\underline{u}}\right)_{\q} \to \left(M_{\underline{u}}\right)_{\q} \to \left(\overline{M_{\underline{u}}}\right)_\q \to 0$ splits for every prime ideal $\q$ in $A$.
\end{enumerate}
\end{enumerate}
\end{theorem}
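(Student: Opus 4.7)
The plan is to handle the seven parts in logical dependency order: parts (1), (5), (6) are quick warm-ups; part (2) is the structural heart; parts (3), (4), (7) then follow as corollaries or dualizations.

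For (1), observe $\Ass_A M_{\underline{u}} \subseteq \Ass_A M = \mathcal{Z} \cup \{0\}$, which is finite by \cite[Theorem 4.3]{BBLSZ}; since $A$ is one-dimensional, distinct nonzero primes are pairwise comaximal, so primary decomposition splits $t(M_{\underline{u}})$ into its $\p$-torsion summands $\Gamma_\p(M_{\underline{u}})$. Part (6) is immediate: $\overline{M_{\underline{u}}}$ is $A$-torsion-free, hence flat over the Dedekind domain $A$. For (5), flatness of $K$ over $A$ gives $M_{\underline{u}} \otimes_A K \cong H^i_{IS}(S)_{\underline{u}}$ where $S = K[X_1, \ldots, X_n]$ and $IS$ is a usual monomial ideal over the field $K$; finite-dimensionality and the finiteness of $\{\alpha(\underline{u})\}$ follow from the combinatorial structure of graded local cohomology of monomial ideals over a field (see \cite{Mon_LC}).

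The heart of the argument is part (2). Fix $\p \in \mathcal{Z}$, set $k = A/\p$ (a finite field by hypothesis), and pass to the completion $T = \widehat{A_\p}$. Over the complete DVR $T$, any $\p$-torsion module decomposes as a direct sum of copies of $E(k)$ and cyclic modules $T/\p^j T$, and the multiplicities are all finite exactly when the socle $(0 :_{\Gamma_\p(M_{\underline{u}})} \p)$ is a finite-dimensional $k$-vector space. To obtain this finiteness, apply $H^i_I(-)$ to the short exact sequence $0 \to R \xrightarrow{\pi} R \to \overline{R} \to 0$ where $\pi$ is a uniformizer of $A_\p$ and $\overline{R} = R/\p R \cong k[X_1, \ldots, X_n]$; the resulting long exact sequence embeds the socle into $H^i_{I\overline{R}}(\overline{R})_{\underline{u}}$, which is a finite-dimensional $k$-vector space because $I\overline{R}$ is a usual monomial ideal over the field $k$.

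Part (4) dualizes this socle analysis: $\overline{N_{\underline{u}}}$ is $\q$-torsion-free (hence $T$-flat), and both its reduction modulo $\q$ (by the same socle argument) and its generic fiber (by part (5) applied over $T$) are finite-dimensional, forcing $\overline{N_{\underline{u}}} \cong T^{a(\underline{u})} \oplus K_\q^{b(\underline{u})}$. For (3), the hypothesis $a_i \notin \q$ ensures that $I\overline{R}$ is the usual monomial ideal generated by $U_1, \ldots, U_c$; tracking the connecting homomorphism in the $\pi$-multiplication long exact sequence rules out divisible $E_A(A/\q)$-summands in $\Gamma_\q(M_{\underline{u}})$. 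For (7b), one uses the explicit form of $(\overline{M_{\underline{u}}})_\q$ inherited from (4) via the faithfully flat map $A_\q \to T$, combined with a direct $\Ext^1_{A_\q}$ computation against the $\q$-torsion summand, to produce the splitting; part (7a) then glues these local splittings in the PID case, while for usual monomial ideals a direct \v{C}ech-complex construction exhibits a canonical splitting. The main obstacle throughout is the socle-finiteness step in part (2); once that bridge from mixed-characteristic $M$ to the finite-field setting $H^i_{I\overline{R}}(\overline{R})$ is established, the remaining parts reduce to standard structure theorems for modules over (complete) DVRs.
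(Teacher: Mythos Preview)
Most of your outline matches the paper's approach, and your treatment of (6) is in fact cleaner: you invoke directly that torsion-free modules over a Dedekind domain are flat, whereas the paper verifies this by localizing, completing, and appealing to the explicit decomposition from (4).

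There is, however, a real gap in (7a) for the PID case. You propose to obtain the global splitting by gluing the local splittings from (7b), but this fails: the obstruction to splitting is $\Ext^1_A\bigl(\overline{M_{\underline{u}}},\, t(M_{\underline{u}})\bigr)$, and because $\overline{M_{\underline{u}}}$ is in general not finitely generated, this Ext group need not commute with localization---the paper flags exactly this point in a remark. The paper instead gives a direct global computation: for each $\p$ it builds the two-term injective resolution $0 \to A/\p^{\beta} \to E_A(A/\p) \xrightarrow{\pi^{\beta}} E_A(A/\p) \to 0$, and shows that $\Hom_A\bigl(\overline{M_{\underline{u}}},\, E_A(A/\p)\bigr)$ is injective (as a direct summand of the Pontrjagin dual $\Hom_A(\overline{M_{\underline{u}}}, K/A)$ of the flat module $\overline{M_{\underline{u}}}$), which forces $\Ext^1_A\bigl(\overline{M_{\underline{u}}}, A/\p^{\beta}\bigr)=0$. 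Your plans for the usual-monomial subcase of (7a) and for (3) also diverge from the paper and are too vague as written: in both places the paper's key input is that when $I$ (respectively its localization at $\q$) is a usual monomial ideal, the component $M_{\underline{u}}$ is \emph{finitely generated} over $A$ (proved by a Mayer--Vietoris induction on the associated primes of $I$); finite generation then makes $\overline{M_{\underline{u}}}$ projective in (7a) and excludes the non-finitely-generated summand $E_A(A/\q)$ in (3). Your ``connecting homomorphism'' argument for (3) does not do this, since an $E$-summand contributes only a one-dimensional piece to the already finite-dimensional $\pi$-kernel. Finally, the assertion in (2) that ``any $\p$-torsion module decomposes as a direct sum of copies of $E(k)$ and cyclic modules'' is false in general; what makes it work here is that finite socle forces $\Gamma_\p(N_{\underline{u}})$ to be Artinian, and the paper extracts the decomposition from Artinianness via Matlis duality over the complete DVR $T$.
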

As a sequel of Theorem \ref{intro} (7), we show that the Bass numbers $\mu_j(\p, M_{\underline{u}})$ are finite for all primes $\p$ in $A$ and $\underline{u} \in \Z^n$.
%We would like to remark that (6) in Theorem \ref{intro} does not follow from (5).
But, we don't know whether
% the short exact sequence
\eqref{eq_ses_intro} splits
% globally
when $A$ is not a PID.
% and $I$ is a $\mathfrak{C}$-monomial ideal.
%However, if $I$ is a usual monomial ideal, then \eqref{eq_ses_intro} splits due to the finite generation of $H^i_I(R)_{\underline{u}}$ for all $\underline{u} \in \Z^n$.

\section{Understanding the torsion of the components}

We first recollect the framework for this study.

\s {\bf Setup.}\label{sa} Let $A$ be a Dedekind domain such that for each height one prime ideal $\p$ in $A$, the local ring $A_{\p}$ has mixed characteristic with finite residue field. Suppose that $R=A[X_1, \ldots, X_n]$ is a polynomial ring with $\deg a=\underline{0} \in \N^n$ for all $a$ in $A$ and $\deg X_j=e_j \in \N^n$ for $j=1, \ldots, n$. Let $I=(a_1U_1, \ldots, a_cU_c)$ be a $\mathfrak{C}$-monomial ideal in $R$ (see Definition \ref{C_monomial}), where $a_j$'s are nonzero elements in $A$ and $U_j$'s are monomials in $X_1, \ldots, X_n$.
% Since $L$ is multihomogeneous, there is an induced $\Z^n$-grading on the local cohomology module
In view of the induced natural $\Z^n$-grading on $H^i_I(R)$, we set $M:=H^i_I(R)=\bigoplus_{\underline{u} \in \Z^n} M_{\underline{u}}$. Further denote
\begin{equation}\label{assPrime}
\mathcal{Z}:=\{\p \mid \p \in \Ass_A M, \p \neq 0\}=\{\p_1, \ldots, \p_r\}.
\end{equation}
Apparently, $\Ass_A M_{\underline{u}} \subseteq \Ass_A M=\bigcup_{\underline{u} \in \Z^n}\Ass_{A} M_{\underline{u}}$ is finite for each $\underline{u} \in \Z^n$. Observe that $\Ass_{A} M$ contains the zero ideal when $M$ has some torsion-free element.

\vspace{0.15cm}
\noindent
{\it Notation}. Throughout this article, we refer the prime divisors of any ideal $J$ of $R$ by the associated primes of the $R$-module $R/J$, denoted by $\Ass_R R/J$.

\vspace{0.15cm}
We first discuss a few basic properties of the torsion submodule which are mostly well-known.
% to the experts.
For the convenience of the reader, we provide the proofs.

Let $J$ be an ideal in $A$ and $\Gamma_J(-)$ denote the $J$-torsion functor, which is defined by $\Gamma_J(N)=\bigcup_{n \geq 1}(0:_N J^n)$ for any $A$-module $N$.

\begin{lemma}\label{gam_rel}
Let $I, J$ be comaximal ideals in $A$ and $N$ be an $A$-module. Then $\Gamma_{IJ}(N)=\Gamma_I(N) \oplus \Gamma_J(N)$.
\end{lemma}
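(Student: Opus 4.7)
The plan is to decompose an element of $\Gamma_{IJ}(N)$ using a partition of unity coming from the comaximality hypothesis. Recall $\Gamma_{IJ}(N) = \bigcup_{n \geq 1}(0:_N (IJ)^n)$. A preliminary step is to note that $(IJ)^n = I^n J^n$ (straight from the definition of product ideals) and that comaximality is preserved under powers: if $I + J = A$, then $I^n + J^n = A$ for every $n \geq 1$. I would prove this by writing $1 = a + b$ with $a \in I$, $b \in J$ and expanding $1 = (a+b)^{2n-1}$ via the binomial theorem, observing that in each term either the exponent of $a$ is $\geq n$ or the exponent of $b$ is $\geq n$.

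Next I would establish the easy inclusion $\Gamma_I(N) + \Gamma_J(N) \subseteq \Gamma_{IJ}(N)$, which follows at once from $IJ \subseteq I$ and $IJ \subseteq J$. For the reverse inclusion, take $x \in \Gamma_{IJ}(N)$ and choose $n$ with $I^n J^n x = 0$. By the previous step write $1 = u + v$ with $u \in I^n$ and $v \in J^n$, so that $x = ux + vx$. Then $J^n(ux) = u(J^n x) \subseteq I^n J^n x = 0$, hence $ux \in \Gamma_J(N)$; symmetrically $vx \in \Gamma_I(N)$. This gives $\Gamma_{IJ}(N) = \Gamma_I(N) + \Gamma_J(N)$.

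Finally, to upgrade the sum to a direct sum, I would show $\Gamma_I(N) \cap \Gamma_J(N) = 0$. If $y$ lies in the intersection, pick $m, \ell$ with $I^m y = 0$ and $J^\ell y = 0$. Since $I^m + J^\ell = A$ by the comaximality-of-powers step (applied to the pair $I^m, J^\ell$, or by taking a sufficiently large common power), we can write $1 = s + t$ with $s \in I^m$ and $t \in J^\ell$, whence $y = sy + ty = 0$.

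There is no real obstacle here; the only thing to be a little careful about is keeping the exponents straight when invoking $(IJ)^n = I^n J^n$ and $I^n + J^n = A$, and making sure the partition-of-unity trick is applied with matching exponents so that $u \in I^n$ actually annihilates $J^n x$ via the relation $I^n J^n x = 0$.
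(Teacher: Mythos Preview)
Your proof is correct. It is the classical Chinese-Remainder/partition-of-unity argument and every step is sound: $(IJ)^n=I^nJ^n$, comaximality of powers via the binomial expansion, the explicit splitting $x=ux+vx$, and the intersection computation.

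The paper, however, proceeds differently: it invokes the Mayer--Vietoris sequence
\[
0 \to H^0_{I+J}(N) \to H^0_I(N)\oplus H^0_J(N) \to H^0_{I\cap J}(N) \to H^1_{I+J}(N)\to\cdots,
\]
observes that $I+J=A$ forces $H^i_{I+J}(N)=0$ for all $i$, and uses $I\cap J=IJ$ to read off the isomorphism. Your argument is more elementary and entirely self-contained; it yields an explicit decomposition at the element level and needs no cohomological machinery. The Mayer--Vietoris route is shorter once one has that tool in hand, and it comes with a bonus the paper does not state but which is implicit in the exact sequence: one gets $H^i_{IJ}(N)\cong H^i_I(N)\oplus H^i_J(N)$ for every $i\ge 0$, not just $i=0$. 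For the purposes of this lemma either approach is perfectly adequate.
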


\begin{proof}
Consider the \emph{Mayer-Vietoris sequence}
 \[0 \to H^0_{I+J}(N) \to H^0_{I}(N) \oplus H^0_{J}(N) \to H^0_{I \cap J}(N) \to H^1_{I+J}(N) \to \cdots.\]
 Since $I, J$ are comaximal ideals, $H^i_{I+J}(N)=0$ for all $i \geq 0$. Particularly, $H^0_{I+J}(N)=0= H^1_{I+J}(N)$. Moreover, $I \cap J=IJ$. Thus $\Gamma_{IJ}(N) \cong \Gamma_{I}(N) \oplus \Gamma_{J}(N)$.
 \end{proof}

 \begin{lemma}
Let $N$ be an $A$-module. Then $\Gamma_\p(N)$ is an $A_\p$-module for each nonzero prime ideal $\p$ in $A$.
 \end{lemma}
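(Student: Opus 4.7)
The idea is to show that for every $a \in A \setminus \p$, multiplication by $a$ is an $A$-linear automorphism of $\Gamma_\p(N)$; by the universal property of localization, this will upgrade the $A$-action to an $A_\p$-action. The crucial ingredient is that $A$ is a Dedekind domain, so every nonzero prime $\p$ is maximal, and hence $A/\p^n$ is a local ring with maximal ideal $\p/\p^n$ for each $n \geq 1$.

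Concretely, I would fix $a \in A \setminus \p$ and $x \in \Gamma_\p(N)$, and pick $n \geq 1$ so that $\p^n x = 0$. Since $\p$ is maximal, the ideal $(a) + \p$ equals $A$, and by expanding $((a) + \p)^{2n-1}$ via the binomial theorem one sees that $(a) + \p^n = A$ as well. Thus there exist $b \in A$ and $c \in \p^n$ with $ab + c = 1$. Setting $y := bx$, we get
\[
ay = abx = (1-c)x = x - cx = x,
\]
since $cx = 0$. The same $y$ lies in $\Gamma_\p(N)$ because $\p^n y = b\,\p^n x = 0$. This shows multiplication by $a$ is surjective on $\Gamma_\p(N)$. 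For injectivity, if $ay = 0$ for some $y \in (0:_N \p^m)$, choose (again by coprimality) $b',c'$ with $ab' + c' = 1$ and $c' \in \p^m$; then $y = (ab' + c')y = b' (ay) + c' y = 0$.

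Hence multiplication by each $a \notin \p$ is bijective on $\Gamma_\p(N)$, so the $A$-module structure factors uniquely through $A \to A_\p$, giving $\Gamma_\p(N)$ the structure of an $A_\p$-module compatible with the given $A$-action. There is no real obstacle here: the only point one must be careful about is that $(a) + \p^n = A$, which uses maximality of $\p$ (a nonzero prime in the Dedekind domain $A$); this is precisely the feature that distinguishes $\p$-torsion in dimension one from the situation over a general ring.
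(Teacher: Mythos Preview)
Your proof is correct and follows the same overall strategy as the paper: both show that every $a \in A \setminus \p$ acts as a unit on $\Gamma_\p(N)$, so the $A$-action factors through $A_\p$. The implementations differ. The paper verifies surjectivity of multiplication by $a$ by checking that $\big(\Gamma_\p(N)/a\,\Gamma_\p(N)\big)_\q = 0$ at every prime $\q$ of $A$ (using that $\Gamma_\p(N)$ is supported only at $\p$, and that $a$ becomes a unit in $A_\p$), while you argue directly at the element level via the coprimality $(a)+\p^n = A$. Your route is slightly more elementary and has the advantage of handling injectivity explicitly, which the paper leaves implicit; conversely, the paper's localization argument is marginally slicker once one knows $\Supp_A \Gamma_\p(N) \subseteq \{\p\}$. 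Either way the key input is the same: $\p$ is maximal in the Dedekind domain $A$, so $(a)$ and $\p^n$ are comaximal for $a \notin \p$.
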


 \begin{proof}
  It is enough to show that any element $a \in A \backslash \p$ acts as an unit on $\Gamma_\p(N)$. Observe that
\[\left(\frac{\Gamma_\p(N)}{a \Gamma_\p(N)}\right)_\q\cong \frac{\Gamma_\p(N)_\q}{\left(a \Gamma_\p(N)\right)_\q}=0\]
for any prime ideal $\q$ in $A$. In fact, $\left(a \Gamma_\p(N)\right)_\p \cong \Gamma_\p(N)_\p$ as $a \notin \p$. Whereas, $\Gamma_\p(N)_\q=0$ for each $\q \neq \p$. Hence $\Gamma_\p(N) \cong a \Gamma_\p(N)$.
 \end{proof}

 \begin{remark}\label{completion}
  Let $\widehat{A_\p}$ denote the completion of $A_\p$ with respect to $\p A_\p$. Observe that $\Gamma_\p(N)$ has a natural $\widehat{A_\p}$-module structure. For instance take $u \in \Gamma_\p(N)$. If $\p^r u=0$, then for any $a \in \widehat{A_\p}$, choose $b \in A_\p$ such that $a-b \in \left(\p A_\p\right)^{r}$. It is easy to verify that the action $a \cdot u=b \cdot u$ gives a natural $\widehat{A_\p}$-module structure on $N$. We note that if $a-c \in \left(\p A_\p\right)^{r}$ for some $b \neq c \in A_\p$, then $b-c=(a-c)-(a-b) \in \left(\p A_\p\right)^{r}$. So $(b-c)u=0$, i.e., $bu=cu$. Thus the action is well-defined.
 \end{remark}

\begin{proposition}\label{tor_sub_fun}
Let $\mathcal{Z}$ be as in \eqref{assPrime}. Fix $\underline{u} \in \Z^n$. Take the torsion submodule $t(M_{\underline{u}})$ of $M_{\underline{u}}$. Then \[t(M_{\underline{u}})=
% \oplus_{i=1}^r \Gamma_{\p_i}(M_{\underline{u}})
\bigoplus_{\p \in \mathcal{Z}} \Gamma_{\p}(M_{\underline{u}}).\]
\end{proposition}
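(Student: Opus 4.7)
The plan is to first establish $t(M_{\underline{u}}) = \Gamma_J(M_{\underline{u}})$ where $J := \p_1 \p_2 \cdots \p_r$, and then decompose $\Gamma_J$ as an internal direct sum by iterating Lemma \ref{gam_rel}. Since each $\p_i$ is nonzero, $J \neq 0$, so the inclusion $\Gamma_J(M_{\underline{u}}) \subseteq t(M_{\underline{u}})$ is immediate: any element killed by $J^N$ is killed by a nonzero scalar. Conversely, let $x \in t(M_{\underline{u}})$ with annihilator $\mathfrak{a} := \ann_A(x) \neq 0$. The embedding $Ax \cong A/\mathfrak{a} \hookrightarrow M_{\underline{u}}$ yields $\Ass_A(A/\mathfrak{a}) \subseteq \Ass_A M_{\underline{u}} \subseteq \Ass_A M$, and since $\mathfrak{a} \neq 0$ the zero prime is excluded, so every associated prime of $A/\mathfrak{a}$ lies in $\mathcal{Z}$. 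Using the unique factorization $\mathfrak{a} = \q_1^{m_1} \cdots \q_s^{m_s}$ into maximal primes (available because $A$ is Dedekind), together with the Chinese remainder theorem, one sees $\Ass_A(A/\mathfrak{a}) = \{\q_1, \ldots, \q_s\}$. Hence each $\q_j$ lies in $\mathcal{Z} = \{\p_1, \ldots, \p_r\}$, and $\mathfrak{a} \supseteq J^N$ for $N := \max_j m_j$, which forces $x \in \Gamma_J(M_{\underline{u}})$.

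For the direct sum decomposition, observe that distinct nonzero primes in a Dedekind domain are maximal and therefore pairwise comaximal, so induction on $r$ using Lemma \ref{gam_rel} (peeling off one $\p_i$ at each step from the product of the remaining primes) produces
\[\Gamma_J(M_{\underline{u}}) = \bigoplus_{\p \in \mathcal{Z}} \Gamma_\p(M_{\underline{u}}),\]
completing the proof. The real content lies in the first equality—turning an abstract torsion element into one uniformly annihilated by a fixed power of $J$. This step hinges on the finiteness of $\mathcal{Z}$ supplied by \cite[Theorem 4.3]{BBLSZ} together with the one-dimensional Dedekind structure, both built into the standing assumption \ref{sai}.
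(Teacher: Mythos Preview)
Your proof is correct and follows essentially the same approach as the paper: reduce to showing $t(M_{\underline{u}}) = \Gamma_{\p_1\cdots\p_r}(M_{\underline{u}})$ via the containment $\Ass_A(A/\ann_A x) \subseteq \mathcal{Z}$, then split $\Gamma_{\p_1\cdots\p_r}$ by iterating Lemma~\ref{gam_rel}. The only cosmetic difference is that you invoke unique factorization of ideals in a Dedekind domain to bound the exponent, whereas the paper passes through $\sqrt{\ann_A z} = \prod \q_i$ and uses that some power of the radical lies in the ideal; these are equivalent observations.
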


\begin{proof}
Notice that $\Ass_A t\left(M_{\underline{u}}\right) \subseteq \Ass_A M_{\underline{u}} \subseteq \Ass_A M$. Because $\p_1\cdots\p_{r-1}$ and $\p_r$ are comaximal ideals, using Lemma \ref{gam_rel} and an easy induction on $r$, we get that
\[\Gamma_{\p_1\cdots\p_r}(N) \cong \bigoplus_{i=1}^r \Gamma_{\p_i}(N).\]
% In view of Lemma \ref{gam_rel},
Therefore, it is enough to show that $t\left(M_{\underline{u}}\right)=\Gamma_{\p_1\cdots\p_r}(M_{\underline{u}})$. For any $m \in \Gamma_{\p_1\cdots\p_r}(M_{\underline{u}})$,
\[(\p_1\cdots\p_r)^b m=0\] for some $b \geq 1$. Hence $\Gamma_{\p_1\cdots\p_r}(M_{\underline{u}}) \subseteq t(M_{\underline{u}})$. Now take $z \in t(M_{\underline{u}})$ and set $J:=\ann_A z$. Suppose that $\Ass_A A/J=\{\q_1, \ldots, \q_t\}$, i.e., $\sqrt{J}=\bigcap_{i=1}^t \q_i$. As $Az \cong A/J$ so \[\Ass_A A/J = \Ass_A Az \subseteq \Ass_A M.\]
% Clearly then
Since $J \neq 0$, the zero ideal does not belong to $\Ass_A A/J$. Thus $\{\q_1, \ldots, \q_t\} \subseteq \{\p_1 \ldots, \p_r\}$.
% since $\p_i$ and $\q_j$ are maximal ideals in $A$ for all $i, j$.
As $\q_i$'s are comaximal ideals in $A$ so we have $\bigcap_{i=1}^t \q_i=\prod_{i=1}^t \q_i$ and hence
\[\left(\prod_{i=1}^t \q_i\right)^w=\left(\sqrt{J}\right)^w \subseteq J\]
for some $w \geq 1$. Therefore, $\left(\prod_{i=1}^r \p_i\right)^w \cdot z=0$ which implies that $z \in \Gamma_{\p_1\cdots\p_r}(M_{\underline{u}})$.
% From Lemma \ref{gam_rel}, the result follows.
\end{proof}

Let $T$ be a PID and $N$ be a finitely generated $T$-module. By the \emph{structure theorem of finitely generated   `modules over a PID},
\[N \cong T^s \oplus T/(\q_1^{\alpha_1}) \oplus T/(\q_2^{\alpha_2}) \oplus \cdots T/(\q_t^{\alpha_t})\]
for $s, \alpha_i \geq 0$ and prime ideals $\q_1, \ldots, \q_t$ in $T$.

\begin{remark}
 There is a typo in \cite[Theorem 23.2 ii)]{Mat}. The correct statement is: \emph{if $\varphi: A \to B$ is a ring homomorphism and $B$ is flat over $A$, then for any $A$-module $M$
\[\Ass_B (M \otimes_A B)=\bigcup_{\p \in \Ass_A M} \Ass_B \left(\frac{B}{\p B}\right).\]}
Therefore, if $A, M$ are as in \ref{sa} and $B=\widehat{A_\p}$ for some $\p \in \mathcal{Z}$, then $\Ass_B (M \otimes_A B)=\{\p B\}$.
\end{remark}

We now use it to describe the structure of graded components of certain local cohomology modules.
% The following
% This result plays a crucial role for our study.
This is the main result of our article.

\begin{theorem}[Structure theorem]\label{struThm}
Let $\mathcal{Z}$ be as in \eqref{assPrime}. Fix $\p \in \mathcal{Z}$ and $\underline{u} \in \Z^n$. Then
\[\Gamma_\p\left(M_{\underline{u}}\right)=E_A\left(\frac{A}{\p}\right)^{\ell(\underline{u})} \oplus \left(\bigoplus_{j=1}^{t(\underline{u})} \frac{A}{\p^{\beta_j(\underline{u})}A}\right)\]
for some finite numbers $\ell(\underline{u}), \beta_j(\underline{u}), t(\underline{u})$.

Furthermore, fix a nonzero prime ideal $\q$ in $A$. Set $T=\widehat{A_\q}$ and put $N=M \otimes_A T=H^i_{IS}(S)$, where $S=T[X_1, \ldots, X_n]$. We denote $\overline{N_{\underline{u}}}:=\frac{N_{\underline{u}}}{\Gamma_\q(N_{\underline{u}})}$. Then
\[\overline{N_{\underline{u}}}=T^{a(\underline{u})} \oplus K_\q^{b(\underline{u})},\]
for some finite numbers $a(\underline{u}), b(\underline{u}) \geq 0$, where $K_\q$ denotes the quotient field of $T$.
\end{theorem}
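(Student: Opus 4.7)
The plan is to work in the completion $T$ (either $\widehat{A_\p}$ for the first assertion or $\widehat{A_\q}$ for the second), a complete DVR with finite residue field $k$, and analyze $N = M \otimes_A T \cong H^i_{IS}(S)$ with $S = T[X_1,\ldots,X_n]$ through its \v{C}ech complex. The finite-residue-field hypothesis is what lets me reduce finiteness questions to the classical monomial setting over a field.

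\emph{For the first assertion,} Remark \ref{completion} gives $\Gamma_\p(M_{\underline{u}})$ a natural $\pi T$-torsion $T$-module structure, where $\pi$ is a uniformizer of $T$. The desired decomposition is then the classical structure theorem (via Matlis duality over the complete DVR $T$): any $\pi T$-torsion $T$-module whose socle $(0:_\bullet \pi)$ is a finite-dimensional $k$-vector space is a finite direct sum of copies of $E_T(k) \cong E_A(A/\p)$ and cyclic modules $T/\pi^n T \cong A/\p^n$. The verification thus reduces to showing the socle is finite-dimensional. I would apply local cohomology to $0 \to S \xrightarrow{\pi} S \to S/\pi S \to 0$: the resulting long exact sequence makes the $\pi$-kernel of $N_{\underline{u}}$ a quotient of $H^{i-1}_{\bar I}(k[X_1,\ldots,X_n])_{\underline{u}}$, where $\bar I \subseteq k[X_1,\ldots,X_n]$ is the image of $IS$, a usual monomial ideal. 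Its $\Z^n$-graded local cohomology components are known to be finite-dimensional over $k$ by a standard \v{C}ech-combinatorial argument.

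\emph{For the second assertion,} I would exploit the \v{C}ech complex $C^\bullet$ on the generators $\pi^{e_1} U_1, \ldots, \pi^{e_c} U_c$ of $IS$ (here $a_j = (\text{unit})\cdot\pi^{e_j}$ in $T$). The key structural observation is that each $\underline{u}$-graded piece of a \v{C}ech term $S_{\prod_{j\in F} \pi^{e_j} U_j}$ is either (i) a free $T$-module of rank at most one, when $F \subseteq G_1 := \{j : e_j = 0\}$ (no $\pi$ is inverted), or (ii) a one-dimensional $K_\q$-vector space, when $F \not\subseteq G_1$ (here $\pi$ is inverted). The $F \not\subseteq G_1$ terms form a subcomplex $C^\bullet_K$ (supersets preserve non-containment in $G_1$), and the quotient $C^\bullet/C^\bullet_K$ is the \v{C}ech complex on the usual monomial ideal $(U_j : j \in G_1) \subseteq S$.

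The associated long exact sequence in degree $\underline{u}$ yields
\[
0 \to \coker(\phi) \to N_{\underline{u}} \to \ker(\psi) \to 0,
\]
where $\phi,\psi$ are the connecting maps. Now $H^i(C^\bullet_K)_{\underline{u}}$ is a finite-dimensional $K_\q$-vector space $K_\q^N$, while $H^i(C^\bullet/C^\bullet_K)_{\underline{u}}$ is a finitely generated $T$-module (subquotient of a bounded complex of rank-$\leq 1$ free $T$-modules). The image of $\phi$ is a finitely generated torsion-free $T$-submodule of $K_\q^N$, hence free of some rank $r$; selecting a $K_\q$-complement to its $K_\q$-span exhibits $\coker(\phi) \cong E^r \oplus K_\q^{N-r}$ with $E = K_\q/T \cong E_T(k)$. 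Since direct sums of $E$ and $K_\q$ are injective $T$-modules, the displayed sequence splits, giving $N_{\underline{u}} \cong E^r \oplus K_\q^{N-r} \oplus T^{a(\underline{u})} \oplus (\text{cyclic torsion})$. Modding out the torsion leaves $\overline{N_{\underline{u}}} \cong T^{a(\underline{u})} \oplus K_\q^{b(\underline{u})}$ with $b(\underline{u}) = N - r$; as a bonus, specializing the same analysis to $\q = \p$ recovers the first assertion. The main technical hurdle will be setting up the \v{C}ech decomposition carefully and verifying the stated form of $\coker(\phi)$, which ultimately rests on finitely generated torsion-free modules over the PID $T$ being free (so they sit as full-rank lattices in their $K_\q$-spans).
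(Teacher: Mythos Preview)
For the first assertion your argument matches the paper's: both use the long exact sequence from $0 \to S \xrightarrow{\pi} S \to S/\pi S \to 0$ to show the socle of $\Gamma_\p(N_{\underline{u}})$ is finite-dimensional over $k$ (bounded by a graded piece of $H^{i-1}$ over $k[X_1,\ldots,X_n]$), and then apply Matlis duality over the complete DVR $T$ to obtain the stated decomposition. For the second assertion your \v{C}ech-complex route is genuinely different. The paper works intrinsically: setting $L = \bigcap_j \q^j \overline{N_{\underline{u}}}$, it shows $\pi$ acts invertibly on $L$ so $L \cong K_\q^{b}$ (with $b$ finite by comparison with local cohomology over $K_\q[X_1,\ldots,X_n]$), and that $\overline{N_{\underline{u}}}/L$ is $\q$-adically separated with finite-dimensional reduction, hence finitely generated by \cite[Theorem~8.4]{Mat}, torsion-free, and therefore $\cong T^{a}$; freeness of the quotient then splits the extension. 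Your filtration of the \v{C}ech complex by whether $\pi$ is inverted is more explicit and delivers more in one stroke: the resulting short exact sequence $0 \to \coker(\phi) \to N_{\underline{u}} \to \ker(\psi) \to 0$ has injective left term $E_T(k)^r \oplus K_\q^{N-r}$ and finitely generated right term, so it splits and yields the full decomposition $N_{\underline{u}} \cong E_T(k)^r \oplus K_\q^{b} \oplus T^{a} \oplus (\text{finite cyclic torsion})$ directly---a splitting the paper only assembles later via Corollary~\ref{split_local}. The trade-off: your approach needs the lattice computation $K_\q^r/(\text{rank-}r\text{ free }T\text{-submodule}) \cong E_T(k)^r$ (immediate from elementary divisors over the PID $T$), while the paper's argument is coordinate-free but leans on the completeness criterion \cite[Theorem~8.4]{Mat}.
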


\begin{remark}\label{tor-comp-rel}
 It is easily seen that $\Gamma_\p\left(N_{\underline{u}}\right)=\Gamma_\p\left(M_{\underline{u}}\right)$. For instance, as $H^0_{\p R}(H^i_I(R))$ is a $T$-module by Remark \ref{completion} so we get
 \[H^0_{\p R}(H^i_I(R)) \cong
%  H^0_{\p R}(H^i_I(R)) \otimes_T T\cong
\left(H^0_{\p R}(H^i_I(R)) \otimes_R R\right) \otimes_T T \cong H^0_{\p R}(H^i_I(R)) \otimes_R S \cong H^0_{\p S}(H^i_{IS}(S))\]
 by repetitively using the \emph{flat base change theorem}.
\end{remark}

\begin{proof}
We prove the statements in three parts. In the first part, we show that
\begin{equation}\label{main_1}
 \Gamma_\p\left(N_{\underline{u}}\right)=E_T\left(\frac{A}{\p}\right)^{\ell(\underline{u})} \oplus \left(\bigoplus_{j=1}^{t(\underline{u})} \frac{T}{\p^{\beta_j(\underline{u})}T}\right)
\end{equation}
for $\p \in \mathcal{Z}$ and some finite $\ell(\underline{u}), \beta_j(\underline{u}), t \geq 0$. In the second part, we prove that
\[\overline{N_{\underline{u}}}=T^{a(\underline{u})} \oplus K_\p^{b(\underline{u})}\]
for any nonzero prime ideal $\p$ in $A$ and some finite $a(\underline{u}), b(\underline{u}) \geq 0$. Finally, in the third part, we deduce the structure of the $A$-module $\Gamma_\p\left(M_{\underline{u}}\right)$
% stated in the theorem
from \eqref{main_1}.

\vspace{0.15cm}
\noindent
{\it Part 1.} Notice that $N_{\underline{u}}=M_{\underline{u}} \otimes T$ and $T/\p T =\widehat{A_\p}/\p \widehat{A_\p} \cong A/\p$ is a field of prime characteristic. Moreover, $T$ is a complete DVR. Therefore, $\p T =(\pi)$ for some $\pi \in T$. We set $\overline{S}=S/\p S$. The short exact sequence
\[0 \to S \xrightarrow{ \cdot \pi} S \to \overline{S} \to 0\]
induces a long exact sequence
\begin{equation}\label{eq1}
\cdots \to H^{i-1}_I(\overline{S}) \xrightarrow{d_{i-1}} H^i_I(S) \xrightarrow{ \cdot \pi} H^i_I(S) \xrightarrow{\theta_i} H^i_I(\overline{S}) \to \cdots.
\end{equation}
 Observe that $T/\p T \cong B/\p B \cong A/\p$ is a field. As $S/\p S \cong \frac{A}{\p}[X_1, \ldots, X_n]$ so $H^i_I(\overline{S})_{\underline{u}}$ is a finite dimensional $A/\p$-vector space for all $i \geq 0$ and each $\underline{u} \in \Z^n$. We put $k=A/\p$. Consider the short exact sequence
\begin{equation}\label{eq2}
0 \to H_1(\p, N_{\underline{u}}) \to N_{\underline{u}} \xrightarrow{ \cdot \pi} N_{\underline{u}} \to H_0(\p, N_{\underline{u}}) \to 0.
\end{equation}
From \eqref{eq1} we have $H_1(\p, N_{\underline{u}})=\ker \left(N_{\underline{u}} \xrightarrow{ \cdot \pi} N_{\underline{u}}\right)=\left(\im d_{i-1}\right)_{\underline{u}}$ is a finite dimensional $k$-vector space. Suppose that $\dim_k H_1(\p, N_{\underline{u}})=d_0$. Applying $\Hom_T(k, -)$ to \eqref{eq2}, we get
\[0 \to \Hom_T(k, k^{d_0}) \to \Hom_T(k, N_{\underline{u}}) \xrightarrow{ 0} \Hom_T(k, N_{\underline{u}}).\]
Hence $\Hom_T(k, N_{\underline{u}})$ is a finite dimensional $k$-vector space.

Next, consider the short exact sequence
\begin{equation}\label{eq3}
0 \to \Gamma_\p(N_{\underline{u}}) \to N_{\underline{u}} \to \overline{N_{\underline{u}}} \to 0.
\end{equation}
Applying $\Hom_T(k, -)$ we get
\[0 \to \Hom_T(k, \Gamma_\p(N_{\underline{u}})) \to \Hom_k(k, N_{\underline{u}}).\]
So $\Hom_T(k, \Gamma_\p(N_{\underline{u}}))$ is a finite dimensional $k$-vector space. Since $T$ is a regular local ring of dimension one,
\[\injdim_{T}(\Gamma_\p(N_{\underline{u}}))\leq \dim T=1.\]
We put $E:=E_T(k)$. As $\Gamma_\p(N_{\underline{u}})$ is a $\p T$-torsion module so it has an injective resolution consisting of $\p$-torsion injective modules, i.e., each injective module which appears in the resolution is some power of $E$. Suppose that
\begin{equation}\label{p-inj}
 0 \to \Gamma_\p(N_{\underline{u}}) \to E^{d_1} \to E^{d_2} \to 0
\end{equation}
is a minimal injective resolution.  Applying $\Hom_T(k, -)$, we get
\[0 \to \Hom_T(k, \Gamma_\p(N_{\underline{u}})) \to \Hom_T(k,E^{d_1}) \xrightarrow{0} \Hom_T(k,E^{d_2}).\]
Recall that $\Hom_T(k,E) \cong E_k(k) \cong k$. Thus $k^{d_1} \cong \Hom_T(k,E^{d_1}) \cong \Hom_T(k, \Gamma_\p(N_{\underline{u}}))$.
% $ \cong k^{d_3}$ and hence $d_1= d_3$ is finite.
So $d_1$ is finite and hence $d_2$ is finite. Here we would like to highlight that if $\injdim \Gamma_\p(N_{\underline{u}})=0$, then $d_2=0$ in \eqref{p-inj} and $\Gamma_\p(N_{\underline{u}}) \cong E^{d_1}$ for finite $d_1$.

\vspace{0.2cm}
From \cite[Theorem 18.6. v)]{Mat} and the inclusion map $\Gamma_\p(N_{\underline{u}}) \hookrightarrow E^{d_1}$ in \eqref{p-inj}, it follows that $\Gamma_\p(N_{\underline{u}})$ is Artinian. Therefore, by \cite[Theorem 3.2.13. c)]{BH}, $\Gamma_\p(N_{\underline{u}})^{\vee\vee} \cong \Gamma_\p(N_{\underline{u}})$, where $(-)^\vee:=\Hom_T(-, E)$ denotes the Matlis dual. Observe that the Artinianness of $\Gamma_\p(N_{\underline{u}})$ is not automatic, as $N_{\underline{u}}$ need not be a finitely generated $T$-module. Next, applying
% the Matlis dual $(-)^\vee:=\Hom_T(-, E)$
$(-)^\vee$ to \eqref{p-inj}, we get
\[0 \to T^{d_2} \to T^{d_1} \to \Gamma_\p(N_{\underline{u}})^\vee \to 0.\]
% see \cite[Theorem 3.2.13]{BH}
It follows that $\Gamma_\p(N_{\underline{u}})^\vee$ is a finitely generated $T$-module. Since $T$ is a PID, by the \emph{structure theorem for finitely generated modules over a PID}, we get
\[\Gamma_\p(N_{\underline{u}})^\vee \cong T^{\ell(\underline{u})} \oplus \left( \bigoplus_{j=1}^{t(\underline{u})} \frac{T}{\p^{\beta_j(\underline{u})}T} \right).\]
for some finite numbers $\ell(\underline{u}), t(\underline{u})$. Further,
observe that
$\Hom_T(T/\p^{\beta_j(\underline{u})}T, E) \cong E_{T/\p^{\beta_j(\underline{u})} T}(k) \cong T/\p^{\beta_j(\underline{u})}T$,
% see remark 3.5.2, p. 49, Kerala notes
as $T/\p^{\beta_j(\underline{u})}T$ is Gorenstein Artin. Hence
\begin{equation}\label{Gamma-Struc}
\Gamma_\p(N_{\underline{u}}) = E^{\ell(\underline{u})} \oplus \left( \bigoplus_{j=1}^{t(\underline{u})} \frac{T}{\p^{\beta_j(\underline{u})}T} \right).
\end{equation}
% https://en.wikipedia.org/wiki/Structure_theorem_for_finitely_generated_modules_over_a_principal_ideal_domain

\vspace{0.15cm}
\noindent
{\it Part 2.} We set $\overline{N_{\underline{u}}}=N_{\underline{u}}/\Gamma_\p(N_{\underline{u}})$ and consider the following commutative diagram
\[
\xymatrix@C=0.5em@R=0.75em{
% & & & 0 \ar[d] & \\
	0 \ar[r] & \Gamma_\p(N_{\underline{u}}) \ar[r] \ar[d]^\pi & N_{\underline{u}} \ar[r] \ar[d]^\pi &\overline{N_{\underline{u}}} \ar[r] \ar[d]^\pi & 0\\
0 \ar[r] & \Gamma_\p(N_{\underline{u}}) \ar[r] & N_{\underline{u}} \ar[r] \ar[d] &\overline{N_{\underline{u}}} \ar[r] \ar[d] & 0\\
&& H_0(\pi, N_{\underline{u}}) \ar[r] & \frac{\overline{N_{\underline{u}}}}{\p \overline{N_{\underline{u}}}} \ar[r]& 0} \]
Since $\dim_k H_0(\pi, N_{\underline{u}})< \infty$ by \eqref{eq1},
% so we get
we get that $\dim_k \overline{N_{\underline{u}}}/\p \overline{N_{\underline{u}}}< \infty$.
We set
% $K=\text{Qt}(T)$, the quotient field of $T$ and
$L:=\bigcap_{j=1}^\infty \p^j \overline{N_{\underline{u}}}$.
% Notice that $\bigcap_{j=1}^\infty p^j \left(\overline{N_{\underline{u}}}/L\right)=0$.
Then $\dim_k (\overline{N_{\underline{u}}}/L)/\p( \overline{N_{\underline{u}}}/L)< \infty$, since we have $(\overline{N_{\underline{u}}}/L)/\p( \overline{N_{\underline{u}}}/L) \cong \overline{N_{\underline{u}}}/\p \overline{N_{\underline{u}}}$. Moreover, $\bigcap_{j=1}^\infty \p^j \left(\overline{N_{\underline{u}}}/L\right)=\left(\bigcap_{j=1}^\infty \p^j\overline{N_{\underline{u}}}\right)/L=0$.
Thus by \cite[Theorem 8.4]{Mat}, it follows that $\overline{N_{\underline{u}}}/L$ is finitely generated.
% https://mathoverflow.net/questions/71699/counter-examples-to-krulls-intersection-theorem
Next, we consider the short exact sequence
\[0 \to (0:_L \pi) \to L \xrightarrow{\cdot \pi} L \to C \to 0.\]

To see
% $\ker \left(L \xrightarrow{\cdot \pi} L\right)=0$,
$(0:_L \pi)=0$, take $a \in L$ such that $\pi a=0$. Suppose that $a=\pi^j\overline{b}$ for some $b \in N_{\underline{u}}$. Observe that $\pi a=\pi^{j+1} \overline{b}=0$. Therefore, $\pi^{j+1}b \in \Gamma_\p(N_{\underline{u}})$, i.e., $\pi^i(\pi^{j+1}b)=\pi^{i+j+1}b=0$ for some $i \geq 1$. Thus $b \in \Gamma_\p(N_{\underline{u}})$. Hence $\overline{b}=0$ and it follows that $a=0$.

We now claim that $C=0$. Let $z \in L$. As $L=\bigcap_{j=1}^\infty \p^j \overline{N_{\underline{u}}} \subseteq \p\overline{N_{\underline{u}}}$ so we have $z=\pi \alpha$ for some $\alpha \in \overline{N_{\underline{u}}}$. It is enough to show that $\alpha \in L$, i.e., $\alpha \in \p^j N_{\underline{u}}$ for all $j\geq 1$.
% For instance,
Since $L \subseteq \bigcap_{j=1}^\infty \p^{j+1} \overline{N_{\underline{u}}}$, we have $z=\pi\alpha \in \p^{j+1} \overline{N_{\underline{u}}}$ for each $j \geq 1$. Therefore, $\pi\alpha=\pi^{j+1} \xi$ for some $\xi \in \overline{N_{\underline{u}}}$. As $\pi$ is a regular element on $\overline{N_{\underline{u}}}$ so $\alpha=\pi^j \xi$. The claim follows.

Since $L \cong \pi L$, $\pi$ acts as an unit on $L$. Thus $L$ is a $K_\p$-module. Hence $L\cong K_\p^a$. We put $W=K_\p[X_1, \ldots, X_n]$. Then $H^i_{IS}(S) \otimes_T K_\p \cong H^i_{IW}(W)$ by the {\it Flat Base Change Theorem}. Thus $\overline{N_{\underline{u}}}\otimes K_\p \cong N_{\underline{u}} \otimes K_\p$ is a finite dimensional $K_\p$-vector space. As $L \subset \overline{N_{\underline{u}}}$ so we have $L \otimes_{K_\p} K_\p \subset \overline{N_{\underline{u}}}\otimes_{K_\p} K_\p$. It follows that $a$ is finite number.

From
% Consider
the following commutative diagram
\[
\xymatrix@C=0.5em@R=1em{  &&0  \ar[d]&  & & \\
	0 \ar[r] & L \ar[r] \ar[d]^\pi & \overline{N_{\underline{u}}} \ar[r] \ar[d]^\pi &\overline{N_{\underline{u}}}/L \ar[r] \ar[d]^\pi & 0\\
0 \ar[r] & L \ar[r] \ar[d]& \overline{N_{\underline{u}}} \ar[r]  &\overline{N_{\underline{u}}}/L  \ar[r] & 0\\
% &C \ar[d] &&&\\
&0 & &&}, \]
we get that the map $\overline{N_{\underline{u}}}/L \xrightarrow{\cdot \pi} \overline{N_{\underline{u}}}/L$ s injective, by the \emph{snake lemma}. So the module $\overline{N_{\underline{u}}}/L$ does not contain any $\p T$-torsion element. Since $\overline{N_{\underline{u}}}/L$ is a finitely generated $T$-module, by the structure theorem,
\[\overline{N_{\underline{u}}}/L\cong T^{b(\underline{u})}\]
for some finite number $b(\underline{u}) \geq 0$. From the short exact sequence
\begin{equation*}\label{int-con}
0 \to L \to \overline{N_{\underline{u}}} \to \overline{N_{\underline{u}}}/L \to 0
\end{equation*}
it follows that
\[\overline{N_{\underline{u}}} \cong K_\p^{a(\underline{u})} \oplus T^{b(\underline{u})}.\]
Next, take any nonzero prime ideal $\q \notin \mathcal{Z}$. Notice that $\Gamma_\q(N_{\underline{u}}) = 0$. 
% and $H_1(\p, N_{\underline{u}})=0$ in \eqref{eq2} .
Since $H_0(\q, N_{\underline{u}})$ is a finite dimensional $k = A/\q $-vector space, using the above method one can get the same expression for $\overline{N_{\underline{u}}}=N_{\underline{u}}/\Gamma_\q(N_{\underline{u}})=N_{\underline{u}}$.

\vspace{0.2cm}
\noindent
{\it Part 3.} From Remark \ref{completion}, we have a natural $T=\widehat{A_\p}$-module structure on $\Gamma_\p\left(M_{\underline{u}}\right)$. By \cite[Theorem 3.6]{Matlis},
\[E_T(T/\p T) \cong E_{B}(B/\p B) \cong E_A(A/\p).\]
Moreover, as $B/\p^{\beta_j(\underline{u})}B$ is an
% Gorenstein,
Artinian ring so $T/\p^{\beta_j(\underline{u})}T \cong B/\p^{\beta_j(\underline{u})}B$. Again
\[\frac{B}{\p^{\beta_j(\underline{u})}B} \cong \left(\frac{A}{\p^{\beta_j(\underline{u})}}\right)_{\p} \cong \frac{A}{\p^{\beta_j(\underline{u})}}.\]
% as $A/\p^{\beta_j}$ is a local ring.
Thus the result follows from Part $1$.
\end{proof}

By $\MaxSpec(A)$, we denote the set of all maximal ideals in $A$.
\begin{proposition}\label{choicePrime}
Let $\q \in \MaxSpec(A)$ such that $a_j \notin \q$ for all $j=1, \ldots, c$.
%  Further assume that $E(A/\p)^{\alpha_i}$
Then $E_A(A/\q)$ can not appear as a direct summand of $t(M_{\underline{u}})$.
\end{proposition}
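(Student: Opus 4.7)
The plan is to pass to the completion $T = \widehat{A_\q}$, observe that under our hypothesis $IS$ collapses to a usual monomial ideal in $S = T[X_1,\ldots,X_n]$, and then use the Čech complex to show that each multigraded component of $N = H^i_{IS}(S)$ is \emph{finitely generated} over $T$. In conjunction with Theorem \ref{struThm}, this will force the $E_T(T/\q T)$-multiplicity $\ell(\underline{u})$ to vanish.

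First I would reduce the question to $\Gamma_\q(N_{\underline{u}})$. By Proposition \ref{tor_sub_fun}, $t(M_{\underline{u}}) = \bigoplus_{\p \in \mathcal{Z}} \Gamma_\p(M_{\underline{u}})$; for $\p \neq \q$ in $\mathcal{Z}$ the summand $\Gamma_\p(M_{\underline{u}})$ contributes only copies of $E_A(A/\p)$ and $A/\p^\beta$, none of which is isomorphic to $E_A(A/\q)$, and if $\q \notin \mathcal{Z}$ then $\Gamma_\q(M_{\underline{u}}) = 0$ and we are done. Assume $\q \in \mathcal{Z}$. By Remark \ref{tor-comp-rel} and Part 3 of Theorem \ref{struThm}, the $E_A(A/\q)$-multiplicity of $\Gamma_\q(M_{\underline{u}})$ matches the $E_T(T/\q T)$-multiplicity $\ell(\underline{u})$ in the structure $\Gamma_\q(N_{\underline{u}}) = E_T(T/\q T)^{\ell(\underline{u})} \oplus \bigoplus_j T/\q^{\beta_j(\underline{u})}T$. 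It therefore suffices to prove $\ell(\underline{u}) = 0$.

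The crucial input is that $a_j \notin \q$ for each $j$, so each $a_j$ is a unit in $A_\q$ and hence in $T$. Consequently $IS = (a_1 U_1, \ldots, a_c U_c) S = (U_1, \ldots, U_c) S$ is a \emph{usual} monomial ideal in $S$. I would then compute $H^i_{IS}(S)$ through the Čech complex
\[
\check{C}^\bullet: 0 \to S \to \bigoplus_j S_{U_j} \to \bigoplus_{j<k} S_{U_j U_k} \to \cdots \to S_{U_1 \cdots U_c} \to 0.
\]
For any monomial $W$ in the $X_i$'s, $S_W$ is a $\Z^n$-graded free $T$-module with each homogeneous piece either zero or a rank one free $T$-module. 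Fixing $\underline{u}$, the strand $\check{C}^\bullet_{\underline{u}}$ is a bounded complex of finitely generated free $T$-modules, so since $T$ is Noetherian its cohomology $N_{\underline{u}}$ is finitely generated over $T$.

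To conclude, $\Gamma_\q(N_{\underline{u}}) \subseteq N_{\underline{u}}$ is a submodule of a finitely generated module over the Noetherian ring $T$, hence itself finitely generated. But $E_T(T/\q T)$ is of infinite length over $T$ (Artinian but not Noetherian, since $T$ is a one-dimensional DVR), so the structure recorded above forces $\ell(\underline{u}) = 0$, proving the claim. I do not anticipate a substantive obstacle: the one nontrivial idea is the reduction to a usual monomial ideal via invertibility of the $a_j$ in $T$; the rest is a textbook Čech-complex computation and an appeal to Theorem \ref{struThm}.
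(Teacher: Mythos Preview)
Your proof is correct and follows essentially the same approach as the paper: both exploit that the $a_j$ become units after passing to $\q$, so that $I$ reduces to a usual monomial ideal whose local cohomology has finitely generated graded components, which then precludes the non-finitely-generated summand $E_A(A/\q)$. The only minor differences are that the paper localizes at $A_\q$ (via the intermediate $A_\theta$ with $\theta = a_1\cdots a_c$) and invokes Proposition~\ref{fin-gen} for finite generation, whereas you complete to $T = \widehat{A_\q}$ and give a direct \v{C}ech-complex argument; your route is slightly more self-contained but otherwise equivalent.
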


\begin{proof}
We set $\theta:=a_1\cdots a_c$. Since $a_i$'s are nonzero and $A$ is a domain, $\theta$ is also a nonzero element in $A$. Clearly, $a_i$'s are invertible in $R_\theta\cong A_\theta[X_1, \ldots, X_n]$ and hence $I_\theta=(X_1, \ldots, X_n)R_\theta$.
Furthermore, $H^i_I(R)_\theta \cong H^i_{I_\theta}(R_\theta)$,
whose graded components are finitely generated $A_\theta$-modules by Proposition \ref{fin-gen}. Now for any $\q \in \MaxSpec(A)$ such that $a_i \notin \q$ for $i=1, \ldots, c$, we have $\left(A_\theta\right)_\q \cong A_\q$. We set $W=A \backslash \q$. Clearly, $W$ is a multiplicatively closed subset of $R$ and $W^{-1}R \cong A_\q[X_1, \ldots, X_n]$. Therefore,
% it is easily seen that
\[H^i_{I_\theta}(R_\theta)_{\underline{u}}=H^i_I(R)_{\underline{u}} \otimes_{A} A_{\theta} \quad \mbox{ and } \quad  H^i_{W^{-1}I}(W^{-1}R)_{\underline{u}} = H^i_{I_\theta}(R_\theta)_{\underline{u}} \otimes_{A_{\theta}} A_\q \]
for each $\underline{u} \in \Z^n$. Therefore, $t(M_{\underline{u}})_\q \subseteq H^i_{W^{-1}I}(W^{-1}R)_{\underline{u}}$ is a finitely generated $A_\q$-module and $E_A(A/\q)_\q \cong E_{A_\q}(k(\q))$ can not occur as its direct summand.
% Take a maximal prime ideal $\q$ in $R_\theta$.
% {\color{red}{In $t(M_{\underline{n}})$, none of the above nonzero primes of $R_\theta$ can occur}}.
\end{proof}

\begin{remark}
% Proposition \ref{choicePrime}
The above result is interesting only when $A$ has infinitely many height one prime ideals.
\end{remark}

\section{Understanding the torsion-free part of the components}

The following statement is an immediate consequence of some well-known
% facts
results.

\begin{theorem}\label{terai}
Fix $\underline{u} \in \Z^n$. Let $K$ be the fraction field of $A$. Then $M_{\underline{u}} \otimes_A K \cong K^{\alpha(\underline{u})}$ for some finite $\alpha(\underline{u}) \geq 0$. Moreover, $\{\alpha(\underline{u}) \mid \underline{u} \in \Z^n\}$ is a finite set.
\end{theorem}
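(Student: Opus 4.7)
The strategy is to reduce, via flat base change, to the classical setting of local cohomology of a usual monomial ideal in a polynomial ring over a field, and then appeal to the well-known combinatorial description of its multigraded Hilbert function.

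First I would observe that since $K = \Frac(A)$ is a localization of $A$, it is flat over $A$. The flat base change theorem for local cohomology then produces a $\Z^n$-graded isomorphism
\[M \otimes_A K \;=\; H^i_I(R) \otimes_A K \;\cong\; H^i_{IS}(S), \qquad S := R \otimes_A K = K[X_1, \ldots, X_n],\]
and in particular
\[M_{\underline{u}} \otimes_A K \;\cong\; H^i_{IS}(S)_{\underline{u}} \qquad \text{for every } \underline{u} \in \Z^n.\]
Next, because each coefficient $a_j \in A$ is nonzero and $A$ is a domain, $a_j$ is a unit in $K$; hence the extended ideal $IS = (a_1 U_1, \ldots, a_c U_c)S$ equals $(U_1, \ldots, U_c)S$, which is a usual $\Z^n$-graded monomial ideal in $S$ in the sense of \cite{HerHib}.

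At this point I would invoke the classical structure theorem for the $\Z^n$-graded components of $H^i_{JS}(S)$ when $J$ is a usual monomial ideal in a polynomial ring over a field --- most conveniently Takayama's formula (or equivalent results due to Musta\c{t}\u{a} and Terai; see the survey \cite{Mon_LC}). This gives two conclusions at once. First, $H^i_{IS}(S)_{\underline{u}}$ is a finite-dimensional $K$-vector space for every $\underline{u} \in \Z^n$, so $M_{\underline{u}} \otimes_A K \cong K^{\alpha(\underline{u})}$ for a well-defined nonnegative integer $\alpha(\underline{u})$. Second, $\dim_K H^i_{IS}(S)_{\underline{u}}$ depends only on the negative support $\sigma(\underline{u}) := \{\, j : u_j < 0 \,\} \subseteq \{1,\ldots,n\}$ of $\underline{u}$. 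Since there are only $2^n$ possible subsets $\sigma$, the set $\{\alpha(\underline{u}) \mid \underline{u} \in \Z^n\}$ has cardinality at most $2^n$ and is in particular finite.

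There is no real obstacle: the theorem is labeled as an immediate consequence of well-known results, and its two ingredients --- flat base change on the homological side, and Takayama's combinatorial formula on the monomial side --- are both standard. The only point requiring any care is the verification that the base change isomorphism respects the $\Z^n$-grading, which is routine since $K$ sits in degree $\underline{0}$.
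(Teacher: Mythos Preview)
Your proposal is correct and follows essentially the same route as the paper: flat base change to $S=K[X_1,\ldots,X_n]$ reduces $IS$ to a usual monomial ideal, and then the known combinatorial description of the multigraded pieces of $H^i_{IS}(S)$ (the paper cites Yanagawa \cite{KY2} for finite-dimensionality and Terai's isomorphism $H^i_I(V)_{\underline{u}}\cong H^i_I(V)_{\underline{u}+e_j}$ for $u_j\neq -1$, which is exactly your ``depends only on the negative support'' statement) gives both conclusions. The only cosmetic difference is that the paper passes explicitly through $\sqrt{IS}$ to reach the square-free case before citing Yanagawa, whereas you appeal directly to the Musta\c{t}\u{a}--Terai/Takayama circle of results.
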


\begin{proof}
 Since $R=A[X_1, \ldots, X_n] \hookrightarrow V:=K[X_1, \ldots, X_n]$ is a flat extension, by the \emph{flat base change theorem}, $H^i_I(R) \otimes V \cong H^i_{IV}(V)$. Observe that $IV$ is a usual monomial ideal in $V$ and
 \[H^i_{IV}(V)_{\underline{u}}=H^i_I(R)_{\underline{u}} \otimes_A K=M_{\underline{u}} \otimes K.\]
 By \cite[Corollary 3.3]{KY2}, graded components of $H^{i}_{J}(V)$
%  (and hence of $H^{i_1}_{J_1}(H^{i_2}_{J_2}(\cdots(H^{i_r}_{J_r}(S))))$)
are finite dimensional $K$-vector spaces for any square-free monomial ideal $J$ in $V$ and $i\geq 0$. In fact, this statement is independent of the characteristic of the field $K$. Since, $H^i_{J}(M)=H^i_{\sqrt{J}}(M)$ for any $V$-module $M$ and ideal $J \subseteq V$, the result follows.

From N. Terai’s formula in \cite{Ter} one has
\[H^i_I(V)_{\underline{u}} \cong H^i_I(V)_{\underline{u}+e_i}\]
for all $\underline{u} \in \Z^r$ such that $u_i \neq -1$, see also \cite[Theorem 2.1 b)]{MM}, \cite[2.3]{Mon_LC}.
% \href{https://upcommons.upc.edu/bitstream/handle/2117/16881/three.pdf}{[p. 19, JAM]}.
It follows that for any $\underline{a}\in \{0,-1\}^n$,
\begin{equation}\label{block_rel}
H^i_I(V)_{\underline{u}} \cong H^i_I(V)_{\underline{a}}
\end{equation}
for each $\underline{u} \in \Z^n$ with $u_i \geq 0$ if $a_i=0$ and $u_i \leq -1$ if $a_i=-1$ for $i=1, \ldots, n$. Hence
\[\left\{\alpha(\underline{u}) \mid \underline{u} \in \Z^n\right\}=\left\{\alpha(\underline{a}) \mid \underline{a} \in \{0,-1\}^n\right\},\]
which is indeed a finite set.
\end{proof}

\section{Local cohomology modules with support
at usual monomial ideals}

All the results in this section can be obtained from some known results when $A=k$ is a field. We are stating them to convey an overview and for the ease of understanding of the readers.

As $A$ has characteristic zero so $\chars K=0$. We further have a natural inclusion
$R:=A[X_1, \ldots, X_n] \hookrightarrow V:=K[X_1, \ldots, X_n]$.

The following result is proved in \cite[Theorem 3.1.4 and Theorem 3.3.3]{MoRoSa}. Here, we provide a self-contained proof without introducing any new notations.

\begin{theorem}\label{priDec}
 Let $I \subseteq R=A[X_1, \ldots, X_n]$ be a usual monomial ideal. Then $I=\cap_{i=1}^m Q_i$, where $Q_i=(X_{i_1}^{a_1}, \ldots, X_{i_k}^{a_k})$ for some $a_j \geq 1$.
\end{theorem}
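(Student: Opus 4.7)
The plan is to mimic the classical decomposition over a field (as in Herzog--Hibi), adapting to the Dedekind-domain coefficient ring $A$. Since $I$ is a \emph{usual} monomial ideal, each generator has the form $a_i U_i$ with $a_i$ a unit in $A$, so after absorbing units I may assume $I=(U_1,\ldots,U_c)$ where each $U_i$ is a monomial in $X_1,\ldots,X_n$.

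The first technical step is the intersection lemma. The monomials form an $A$-basis of $R$, so every element of $R$ admits a unique expansion as a finite $A$-linear combination of distinct monomials. This gives the standard divisibility criterion: a monomial $m$ lies in the monomial ideal $J=(V_1,\ldots,V_s)$ if and only if some $V_j$ divides $m$ in the monoid of monomials. Because $A$ is a domain this criterion carries over from the field case without change, and it yields the LCM formula
\[
(V_1,\ldots,V_s)\cap(W_1,\ldots,W_t) \;=\; \bigl(\lcm(V_i,W_j) : 1\leq i\leq s,\; 1\leq j\leq t\bigr).
\]

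The second step is the splitting identity. Suppose a generator $U_i$ factors as $U_i = V\cdot W$ with $\supp(V)\cap\supp(W)=\emptyset$ (so $\gcd(V,W)=1$ and $\lcm(V,W)=VW=U_i$). I would apply the LCM formula to obtain
\[
(U_i,U_1,\ldots,\widehat{U_i},\ldots,U_c) \;=\; (V,U_1,\ldots,\widehat{U_i},\ldots,U_c)\cap(W,U_1,\ldots,\widehat{U_i},\ldots,U_c),
\]
since $\lcm(V,U_j)$ and $\lcm(W,U_j)$ are multiples of $U_j$ (hence redundant), and similarly for pairs $\lcm(U_j,U_k)$.

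The final step is an induction on the complexity $\kappa(I):=\sum_{i=1}^{c}\bigl(|\supp U_i|-1\bigr)$. If $\kappa(I)=0$, then every $U_i$ is a pure power of a single variable and $I$ itself has the required form (so $m=1$). Otherwise choose a generator $U_i$ with $|\supp U_i|\geq 2$, and factor $U_i = V\cdot W$ with $V=X_j^{a}$ for some $X_j\in\supp U_i$ (taking $a$ to be the exact exponent) and $W=U_i/V$. Both factors are nontrivial with disjoint support, so the splitting identity expresses $I$ as the intersection of two monomial ideals, each of strictly smaller $\kappa$. By induction each of these decomposes as an intersection of ideals of the form $(X_{i_1}^{a_1},\ldots,X_{i_k}^{a_k})$, and combining the two decompositions finishes the proof.

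The main technical obstacle is the intersection lemma over the non-field base $A$; everything downstream is a clean Noetherian induction. A minor bookkeeping point is that when $V$ or $W$ happens to be divisible by some existing $U_j$, one simply discards the redundant generator — this does not affect the decomposition.
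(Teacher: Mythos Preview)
Your argument is correct and takes a genuinely different route from the paper. The paper proves the result by \emph{transfer}: it passes to the fraction field $K$ of $A$, invokes the known decomposition for monomial ideals in $V=K[X_1,\ldots,X_n]$ from Herzog--Hibi, and then pulls the decomposition back to $R$ via the contraction lemma $IV\cap R=I$ (their Lemma~\ref{main_alt}). You instead redo the underlying combinatorial proof directly over $A$: the monomial $A$-basis gives the divisibility criterion, hence the LCM intersection formula, hence the splitting identity, and an induction on $\kappa(I)$ finishes. The paper's approach is shorter if one is willing to cite the field case as a black box, and it isolates the contraction lemma $IV\cap R=I$, which is a useful standalone fact. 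Your approach is more self-contained and in fact more general: nothing in your argument uses that $A$ is a Dedekind domain or even a domain---the divisibility criterion and LCM formula hold over any commutative ring, so your proof shows the decomposition is valid for $R=A[X_1,\ldots,X_n]$ with $A$ arbitrary.
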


We first prove a lemma. Following the notations in \cite{HerHib}, we set
\[\mathrm{Mon}(V)=\{X_1^{\xi_1}\cdots X_n^{\xi_n}\mid \xi_i \geq 0 \mbox{ for } i=1, \ldots, n\}.\]
For any polynomial $f$ in $V$, we write $f=\sum_{U \in \mathrm{Mon}(V)} \alpha_U U$ where $\alpha_U \in K$. Then the \emph{ support of} $f$, denoted by $\mathrm{supp}(f)$, is defined as
\[\mathrm{supp}(f):=\{U \in \mathrm{Mon}(V) \mid \alpha_U \neq 0\}.\]

\begin{lemma}\label{main_alt}
 Let $I$ be a usual monomial ideal in $R$. Then $IV \cap R=I$.
\end{lemma}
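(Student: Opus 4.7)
The plan is to prove that for any usual monomial ideal $J = (V_1, \ldots, V_s)$ in a polynomial ring $B[X_1, \ldots, X_n]$, a polynomial $f$ lies in $J$ if and only if every monomial in $\mathrm{supp}(f)$ is divisible by some $V_k$. Once this monomial membership criterion is available for both $I \subseteq R$ and $IV \subseteq V$, the lemma follows at once: for $f \in R$ with expansion $f = \sum_U \alpha_U U$, the coefficients $\alpha_U$ lie in $A$, and since $A \hookrightarrow K$ is injective, the set $\mathrm{supp}(f)$ is the same whether $f$ is viewed in $R$ or in $V$. Hence the criterion for $f \in IV$ coincides with the criterion for $f \in I$, forcing $IV \cap R \subseteq I$; the reverse inclusion is obvious.

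For the criterion itself, the easy direction is: if every $U \in \mathrm{supp}(f)$ satisfies $U_{j(U)} \mid U$ for some generator $U_{j(U)}$, then $f = \sum_U \alpha_U \bigl(U/U_{j(U)}\bigr) U_{j(U)} \in I$. For the converse, suppose $f = \sum_k g_k U_k$ with $g_k \in B[X_1, \ldots, X_n]$ and expand each $g_k$ as $\sum_V \beta_{k,V} V$. Then
\[
f = \sum_W \Bigl( \sum_{(k,V)\,:\, V U_k = W} \beta_{k,V} \Bigr) W,
\]
and any monomial $W$ with nonzero coefficient in $f$ is, by construction, of the form $V U_k$ for some $k$, hence divisible by $U_k$.

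Combining these observations, $f \in IV \cap R$ forces every monomial in $\mathrm{supp}(f)$ (with respect to coefficients in $K$, equivalently with respect to coefficients in $A$) to be divisible by some $U_j$, and hence $f \in I$. I do not foresee a real obstacle here; the only point requiring a moment's care is that cancellations among the products $g_k U_k$ cannot introduce a surviving monomial that fails to be a multiple of some $U_k$, which is immediate from the expansion displayed above.
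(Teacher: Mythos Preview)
Your proof is correct and follows essentially the same approach as the paper: both arguments rest on the monomial membership criterion (a polynomial lies in a monomial ideal iff every monomial in its support is a multiple of some generator), and then observe that for $f\in R$ the support is unchanged when passing to $V$, so membership in $IV$ forces membership in $I$. The only cosmetic difference is that the paper quotes this criterion from \cite[Corollary~1.1.3 and Proposition~1.1.5]{HerHib} over the field $K$, whereas you prove it directly (and over an arbitrary coefficient ring), which makes your argument self-contained but not genuinely different.
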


\begin{proof}
 We know that $I \subseteq IV \cap R$. Conversely, let $f \in IV \cap R$. Suppose that $I=(U_1, \ldots, U_c)R$ for some monomials $U_1, \ldots, U_c$ in $R$. Then $IV=(U_1, \ldots, U_c)V$ is also a usual monomial ideal. Thus by \cite[Corollary 1.1.3]{HerHib}, $\supp(f) \subseteq IV$. So for any monomial $U \in \supp(f)$, by \cite[Proposition 1.1.5]{HerHib}, $U=wU_i$ for some monomial $w$ in $V$. Notice that $w:=X_{i_1}^{a_1}\cdots X_{i_m}^{a_m}$ is also a usual monomial in $R$. Hence $U \in I$, i.e., $\supp(f) \subseteq I$.
 %Since $I$ is a usual monomial ideal, by \cite[Corollary 1.1.3]{HerHib}, it follows that $U \in I$.
 Thus $f \in I$. The result follows.
\end{proof}

\begin{proof}[Proof of Theorem \ref{priDec}]
 Suppose that $\{U_1, \ldots, U_c\}$ is a usual monomial generating set of $I$. Then $IV=(U_1, \ldots, U_c)V$ is a usual monomial ideal in $V$. Hence by \cite[Theorem 1.3.1]{HerHib},
 \[IV=\bigcap_{i=1}^r Q_i,\]
 where $Q_i=(X_{i_1}^{a_1}, \ldots, X_{i_k}^{a_k})V$ for $1 \leq i_1<i_2<\cdots<i_k \leq n$. Notice that $Q_i'=(X_{i_1}^{a_1}, \ldots, X_{i_k}^{a_k})R$ is a usual monomial ideal in $R$ and $Q_i'V=Q_i$. Therefore, $Q_i \cap R=Q'_i$ by Lemma \ref{main_alt}. Thus we get that
 \[I=IV \cap R= \left(\bigcap_{i=1}^r Q_i \right) \cap R =\bigcap_{i=1}^r \left(Q_i \cap R\right)=\bigcap_{i=1}^r Q_i'.\]
 The result follows.
\end{proof}

\begin{note*}
Recall that if $I=\bigcap_{i=1}^m Q_i$ is an irredundant primary decomposition, then
\[\Ass R/I=\{\sqrt{Q_i} \mid i =1, \ldots, r\}.\]
Hence the associated primes of $I$ are monomial prime ideals.
\end{note*}

Let $C$ be a commutative Noetherian ring and $W=C[Y_1, \ldots, Y_m]$ be a polynomial ring over $C$. We set $\m=(Y_1, \ldots, Y_m)W$.
% It is easily seen that
In the forthcoming theorem, we are going to use the following well-known result:
\begin{equation}\label{LC-inv}
H^i_\m(W)=
\begin{cases}
 C[Y_1^{-1}, \ldots, Y_m^{-1}](-1,-1,\cdots,-1)  & \mbox{ if } i = m \\
 0 & \mbox{ otherwise }.\\
\end{cases}
\end{equation}

\begin{note}\label{primeLC}
 Let $P$ be a monomial prime ideal in $R$, that is, $P=(X_{i_1}, \ldots, X_{i_t})$ for some $1 \leq i_1< \cdots < i_t \leq n$. We relabel the variables and assume that $P=(X_1, \ldots, X_t)$. Now we put $W=A[X_1, \ldots, X_t]$. Observe that $W \hookrightarrow W[X_{t+1}, \ldots, X_n] = R$ is a flat extension. Therefore,
\begin{align*}
 H^i_{P}(R)&\cong H^i_P(W) \otimes_W R\\
 &\cong H^i_P(W) \otimes_W \left(W \otimes_A A[X_{t+1}, \ldots, X_n]\right)\\
 &\cong H^i_P(W) \otimes_A A[X_{t+1}, \ldots, X_n]\\
&= \begin{cases}
 A[X_1^{-1}, \ldots, X_t^{-1}, X_{t+1}, \ldots, X_n](-1,-1,\cdots,-1,0,\cdots,0) & \mbox{ if } i = t \\
 0 & \mbox{ otherwise }.
\end{cases}
\end{align*}
(In the above formula we have shift of $-1$ in the first $t$ co-ordinates and no shift in the rest).
% whose graded components are clearly finitely generated
due to \eqref{LC-inv}.
\end{note}

\begin{proposition}\label{fin-gen}
 Suppose that $I$ is a usual monomial ideal. Then
%  for each $\underline{u} \in \Z^r$, the graded components
$M_{\underline{u}}$ are finitely generated $A$-modules for all $\underline{u} \in \Z^n$.
\end{proposition}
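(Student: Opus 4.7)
The plan is to compute $H^i_I(R)$ via the \v Cech complex on the monomial generators of $I$, reducing the statement to a routine finiteness argument over the Noetherian ring $A$. Write $I = (V_1, \ldots, V_c)$ with each $V_j$ a monomial in $X_1, \ldots, X_n$. Then $H^i_I(R)$ is the $i$-th cohomology of the \v Cech complex
\[
\check{C}^\bullet := \check{C}^\bullet(V_1, \ldots, V_c; R) : 0 \to R \to \bigoplus_j R_{V_j} \to \bigoplus_{j_1 < j_2} R_{V_{j_1} V_{j_2}} \to \cdots \to R_{V_1 \cdots V_c} \to 0,
\]
whose terms are finite direct sums of localizations $R_V$ of $R$ at monomials $V$. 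This complex inherits the $\Z^n$-grading from $R$, and since extracting a single multidegree $(-)_{\underline{u}}$ is an exact functor (it selects a direct summand), one has $H^i_I(R)_{\underline{u}} \cong H^i\bigl(\check{C}^\bullet_{\underline{u}}\bigr)$ as $A$-modules.

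The key point I would verify is that for any monomial $V = X_1^{b_1} \cdots X_n^{b_n}$, the localization $R_V$ is a free $A$-module on the monomial basis
\[
\bigl\{X_1^{c_1} \cdots X_n^{c_n} \ : \ c_j \in \Z \text{ when } b_j > 0,\ c_j \geq 0 \text{ when } b_j = 0\bigr\}.
\]
Consequently, for each $\underline{u} \in \Z^n$, the component $(R_V)_{\underline{u}}$ contains at most one such basis monomial (namely $X_1^{u_1} \cdots X_n^{u_n}$, present precisely when $u_j \geq 0$ for every $j$ with $b_j = 0$), so it is either zero or a free $A$-module of rank one. Hence each term $\check{C}^k_{\underline{u}}$ is a finitely generated free $A$-module.

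Since $A$ is Noetherian, every subquotient of a finitely generated $A$-module is finitely generated, so $H^i\bigl(\check{C}^\bullet_{\underline{u}}\bigr) \cong H^i_I(R)_{\underline{u}}$ is finitely generated, as desired. The only mild subtlety lies in the explicit description of the multigraded components of $R_V$, which is a direct consequence of the $\Z^n$-graded structure of $R$ and the observation that inverting a monomial allows the exponents of the participating variables to range over all of $\Z$. An alternative route, perhaps more in line with the paper's flow, would combine Theorem \ref{priDec} with Note \ref{primeLC} via induction on the number of monomial prime divisors using the Mayer--Vietoris sequence, after verifying the easy identity $(J \cap J') + P = (J+P) \cap (J'+P)$ for monomial ideals; however the \v Cech argument above bypasses this and is essentially self-contained.
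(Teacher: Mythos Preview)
Your proof is correct and takes a genuinely different route from the paper. The paper proceeds via the primary decomposition of Theorem \ref{priDec}: it writes $\sqrt{I}=P_1\cap\cdots\cap P_s$ with each $P_i$ a monomial prime, invokes Note \ref{primeLC} to handle the base case of a single monomial prime explicitly, and then runs a double induction (descending on $\hgt I$, ascending on $s$) using the Mayer--Vietoris sequence for $P_1$ and $P'=P_2\cap\cdots\cap P_s$. Your argument bypasses all of this structural input by computing $H^i_I(R)$ directly from the \v Cech complex on the monomial generators and observing that each multigraded piece $(R_V)_{\underline{u}}$ is free of rank at most one over $A$; finiteness then drops out from Noetherianity of $A$. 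Your approach is shorter and essentially self-contained, and it even yields an explicit bound on the number of generators of $M_{\underline{u}}$ in terms of $c$. The paper's approach, on the other hand, reuses machinery already set up in Section 4 and illustrates the Mayer--Vietoris technique, which is a recurring tool elsewhere in the article (e.g.\ Lemma \ref{gam_rel} and Example \ref{eg1}). You correctly anticipated this alternative at the end of your write-up.
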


\begin{proof}
Let $\Ass R/I=\{P_1, \ldots, P_s\}$. By Theorem \ref{struThm}, $P_i=(X_{i_1}, \ldots, X_{i_r})$ for $1 \leq i_1< \cdots <i_r \leq n$. Recall that $H^i_I(R)=H^i_{\sqrt{I}}(R)$ for each $i \geq 0$. After re-arranging, we write
\[\sqrt{I}=P_1 \cap P_2 \cap \cdots \cap P_s,\]
where $\hgt P_1 \leq \hgt P_2 \leq \cdots \leq \hgt P_s$. We use descending induction on height $h$ of the ideal $I$. Suppose that $h=n$. Then $s=1$ and $P_1=(X_1, \ldots, X_n)$. So $M_{\underline{u}}$ is finitely generated due to \eqref{LC-inv}.

% To complete the induction,
It is now sufficient to show that the result is true when $h=r<n$ assuming that it is true for ideals with height greater than or equal to $r+1$.
To prove this statement, we use ascending induction on $s$, the number of associated primes of $I$. Let $s=1$. In light of Note \ref{primeLC}, graded components $H^i_{P_1}(R)$ are finitely generated. Now suppose that the result is known for $s-1$. We set $P':=P_2 \cap \cdots \cap P_s$. Then $\sqrt{I}=P_1 \cap P'$.
% and $\hgt P_1 \leq \hgt P'$.
Consider the long exact sequence
\[ \cdots \to H^i_{P_1+P'}(R) \to H^i_{P_1}(R) \oplus H^i_{P'}(R) \to H^i_{\sqrt{I}}(R) \to H^{i+1}_{P_1+P'}(R) \to \cdots.\]
Since
% $\hgt P' \geq \hgt P_1$ and
$\hgt(P_1+P') \geq \hgt P_1+1=\hgt I+1=r+1$,
% is the intersection of $s-1$ prime ideals,
by our assumption graded components of
% $H^i_{P'}(R)$ and
$H^i_{P_1+P'}(R)$ are finitely generated. Again by induction hypothesis on $s$, graded components of
% $H^i_{P'}(R)$ and
$H^i_{P'}(R)$ are finitely generated. It follows that
% graded components of $H^i_{P_1}(R) \oplus H^i_{P'}(R)$ are finitely generated. Hence
graded components of $H^i_I(R)$ are finitely generated.
\end{proof}

\section{Flatness of the torsion-free part}\label{flat_free} Consider the exact sequence
 \[0 \to t\left(M_{\underline{u}}\right) \to M_{\underline{u}} \xrightarrow{f} M_{\underline{u}} \otimes K.\]
 By $t(-)$ we denote the torsion part of some module. We set $f(M_{\underline{u}})=\overline{M_{\underline{u}}}$. Then we get the short exact sequence
 \[0 \to t\left(M_{\underline{u}}\right) \to M_{\underline{u}} \to \overline{M_{\underline{u}}} \to 0,\]
which induces the short exact sequence
 \[0 \to t\left(M_{\underline{u}}\right)_\p \to \left(M_{\underline{u}}\right)_\p \to \left(\overline{M_{\underline{u}}}\right)_\p \to 0\]
 for each prime ideal $\p$ in $A$. Take $\p \in \mathcal{Z}$, defined in \eqref{assPrime} and fix it. We put $B:=A_\p$ and $\n:=\p A_\p$. Since $(B, \n)$ is a DVR, any nonzero element is of the form $u \pi^r$, where $u$ is a unit in $B$ and $\n=(\pi)$. From Proposition \ref{tor_sub_fun} and Theorem \ref{struThm}, we have the following:
 \[t\left(M_{\underline{u}}\right)
%  =\bigoplus_{\p \in \MaxSpec(A)} t\left(M_{\underline{u}}\right)_\p
=\bigoplus_{\p \in \mathcal{Z}} t\left(M_{\underline{u}}\right)_{\p} \quad
\mbox{ and } \quad t\left(M_{\underline{u}}\right)_\p= E(A/\p)^{\ell(\underline{u})} \oplus \left(\bigoplus_{j=1}^{t(\underline{u})} B/\n^{\beta_j(\underline{u})}\right)\]
for each $\p \in \mathcal{Z}$. Together we get that
\begin{equation}\label{torsion}
t\left(M_{\underline{u}}\right)=\left(\bigoplus_{i=1}^r E(A/\p_i)^{\ell_i(\underline{u})} \right) \oplus \mbox{ a finite length module}.
\end{equation}

\begin{proposition}\label{flat}
For each $\underline{u} \in \Z^r$,
% the graded component
$\overline{M_{\underline{u}}}$ is a flat $A$-module.
\end{proposition}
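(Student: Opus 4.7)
The plan is to reduce the flatness of $\overline{M_{\underline{u}}}$ to the classical fact that over a Dedekind domain (indeed, over any PID) a torsion-free module is flat, then to note that this passes to arbitrary (not necessarily finitely generated) modules via a direct-limit argument.

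First I would observe that $\overline{M_{\underline{u}}}$ is torsion-free as an $A$-module by construction: it is defined as the image of $M_{\underline{u}}$ in $M_{\underline{u}} \otimes_A K$, which is torsion-free as a submodule of a $K$-vector space. Equivalently, the kernel of the localization map is exactly $t(M_{\underline{u}})$, so its quotient has no nonzero torsion.

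Next, flatness being a local property, it suffices to show that $(\overline{M_{\underline{u}}})_\p$ is flat over $A_\p$ for every prime ideal $\p$ of $A$. For $\p = 0$, the localization $(\overline{M_{\underline{u}}})_0$ is a $K$-vector space, which is trivially flat. For $\p \neq 0$, the ring $A_\p$ is a DVR since $A$ is a Dedekind domain. Torsion-freeness is preserved under localization over a domain: if $(a/s)(m/t) = 0$ in $(\overline{M_{\underline{u}}})_\p$ with $a/s \neq 0$, then some $u \notin \p$ satisfies $uam = 0$ in $\overline{M_{\underline{u}}}$, and since $ua \neq 0$ the torsion-freeness of $\overline{M_{\underline{u}}}$ over $A$ forces $m = 0$. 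So $(\overline{M_{\underline{u}}})_\p$ is a torsion-free module over the DVR $A_\p$.

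Finally, over a PID $B$, every torsion-free $B$-module $N$ is flat. Indeed, $N$ is the directed union of its finitely generated submodules $N_i$, each of which is torsion-free and hence free by the structure theorem for finitely generated modules over a PID. Thus $N = \varinjlim N_i$ is a directed colimit of free (hence flat) modules, and the directed colimit of flat modules is flat. Applying this with $B = A_\p$ and $N = (\overline{M_{\underline{u}}})_\p$ completes the proof. The main (very mild) subtlety here is that $\overline{M_{\underline{u}}}$ need not be finitely generated, so the structure theorem cannot be applied directly; this is precisely what the direct-limit observation handles.
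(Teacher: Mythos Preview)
Your proof is correct, and it takes a genuinely different route from the paper's.

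The paper argues by localizing at a nonzero prime $\q$, then passing to the completion $T=\widehat{A_\q}$ by faithful flatness, and finally invoking the Structure Theorem (Theorem~\ref{struThm}) to write $\overline{N_{\underline{u}}}\cong T^{a(\underline{u})}\oplus K_\q^{b(\underline{u})}$; since both summands are flat $T$-modules, the required Tor vanishes. In contrast, you bypass the Structure Theorem entirely and use only the elementary fact that a torsion-free module over a PID is flat (as a directed colimit of finitely generated free submodules), applied after localizing at each height-one prime of the Dedekind domain $A$. Your argument is shorter, self-contained, and in fact proves the more general statement that \emph{any} torsion-free module over a Dedekind domain is flat, with no reference to local cohomology or to the specific setup of the paper. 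The paper's approach, on the other hand, illustrates how the Structure Theorem feeds into the later results, and makes the explicit shape of $\overline{N_{\underline{u}}}$ visible---information that is used again in the computation of Bass numbers (Corollary~\ref{bass}) but is not needed for flatness alone.
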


\begin{proof}
 We need to show that
\[\Tor^A_1(\overline{M_{\underline{u}}}, L) =0\]
 for all finitely generated $A$-module $L$. Now   for all
%  nonzero
prime ideals $\q$ in $A$, we have
 \[\Tor^A_1(\overline{M_{\underline{u}}}, L) \otimes_A A_\q \cong \Tor^{A_\q}_1(\overline{\left(M_{\underline{u}}\right)}_{\q}, L_\q).\]
 If $\q=0$, then $A_\q$ is the fraction field of $A$ and hence $\Tor^{A_\q}_1(\overline{\left(M_{\underline{u}}\right)}_{\q}, L_\q)=0$. So it is enough to prove the statement for $V:=M_{\q}$ with graded components $V_{\underline{u}}=M_{\underline{u}}\otimes_A B$ over $B:=A_\q$ for any nonzero prime ideal $\q$ in $A$. Since the completion $T:=\widehat{B}$ of $B$ with respect to $\q B$ is a faithfully flat $B$-module. In light of \cite[Theorem 7.2]{Mat}, it is enough to show that $\Tor^{B}_1(\overline{V_{\underline{u}}},L) \otimes_B \widehat{B}\cong \Tor^{\widehat{B}}_1(\widehat{\overline{V_{\underline{u}}}},\widehat{L})=0$ for all finitely generated $B$-module $L$. From Theorem \ref{struThm} we have $\overline{\widehat{V_{\underline{u}}}} \cong T^{a(\underline{u})} \oplus K_\q^{b(\underline{u})}$ for finite $a(\underline{u}),b(\underline{u}) \geq 0$. Since $K_\q =T_{(0)}$ is a flat $T$-module, the result follows.
\end{proof}

\section{Splitting}

Fix $\underline{u}\in \Z^n$. We first recall the short exact sequence
 \begin{equation}\label{eq_ses}
 0 \to t\left(M_{\underline{u}}\right) \to M_{\underline{u}} \to \overline{M_{\underline{u}}} \to 0,
 \end{equation}
 which we obtained in
%  the previous
Section \ref{flat_free}. In consequence of Proposition \ref{fin-gen}, \ref{flat}  we get the following.

% https://math.stackexchange.com/questions/609827/torsion-free-and-projective-modules-over-a-dedekind-domain
\begin{corollary}\label{mono_split}
If $I$ is a usual monomial ideal, then the short exact sequence \eqref{eq_ses} splits.
\end{corollary}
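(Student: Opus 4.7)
The plan is to combine the two results that immediately precede the corollary. When $I$ is a usual monomial ideal, Proposition \ref{fin-gen} ensures that $M_{\underline{u}}$ is a finitely generated $A$-module. Since finitely generated modules over the Noetherian ring $A$ are closed under taking quotients, the torsion-free quotient $\overline{M_{\underline{u}}}$ is also finitely generated.

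Next, I would apply Proposition \ref{flat} to conclude that $\overline{M_{\underline{u}}}$ is a flat $A$-module. Now I invoke the standard fact that a finitely generated flat module over a Noetherian ring is projective (one can, for example, cite \cite[Theorem 7.12]{Mat} or give the local argument: finitely generated flat over the local Noetherian ring $A_{\p}$ is free, and finitely generated $+$ locally free over a Noetherian ring gives projective). Therefore $\overline{M_{\underline{u}}}$ is projective.

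With $\overline{M_{\underline{u}}}$ projective, the short exact sequence \eqref{eq_ses} splits by the universal lifting property of projective modules applied to the identity map on $\overline{M_{\underline{u}}}$.

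There is no serious obstacle; the content is bookkeeping to check that the hypotheses of Propositions \ref{fin-gen} and \ref{flat} are in force and that finite generation plus flatness upgrades to projectivity over our Dedekind base. The only subtlety worth flagging is that the argument relies crucially on the usual-monomial hypothesis through Proposition \ref{fin-gen}; for a general $\mathfrak{C}$-monomial ideal $M_{\underline{u}}$ need not be finitely generated, which is exactly why the splitting is left open in that generality (and why the PID case is handled by a different route in Theorem \ref{intro}(7)(a)).
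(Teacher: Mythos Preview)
Your proof is correct and follows essentially the same route as the paper: use Proposition~\ref{fin-gen} to get finite generation, then upgrade $\overline{M_{\underline{u}}}$ to a projective $A$-module, and conclude that \eqref{eq_ses} splits. The only cosmetic difference is that the paper phrases the upgrade as ``finitely generated torsion-free over a Dedekind domain is projective'' (invoking that $\overline{M_{\underline{u}}}$ is torsion-free by construction), whereas you use ``finitely generated flat over a Noetherian ring is projective'' via Proposition~\ref{flat}; both are standard and interchangeable here.
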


\begin{proof}
Recall that over a Dedekind domain any finitely generated, torsion-free module is projective. From Proposition \ref{fin-gen}, \ref{flat}  it thus follows that $\overline{M_{\underline{u}}}$ is a projective module. Hence
\[\Ext^1_{A} \left(\overline{M_{\underline{u}}}, t\left(M_{\underline{u}}\right)\right)=0.\]
So \eqref{eq_ses} splits.
\end{proof}

We don't know whether \eqref{eq_ses} splits when $I$ is a $\mathfrak{C}$-monomial ideal. However, we now show that \eqref{eq_ses} splits when $A$ is a PID. In particular, \eqref{eq_ses} splits locally. This is the main result in this section.

\begin{proposition}\label{split}
 If we further assume that $A$ is a PID, then the short exact sequence \eqref{eq_ses} splits.
\end{proposition}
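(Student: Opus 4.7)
\textit{Proof Proposal.}
The plan is to verify $\Ext^{1}_{A}(\overline{M_{\underline{u}}}, t(M_{\underline{u}})) = 0$, which is equivalent to the splitting of \eqref{eq_ses}. Combining Proposition \ref{tor_sub_fun}, Theorem \ref{struThm}, and the finiteness of $\mathcal{Z}$ from \cite{BBLSZ}, I would first write
\[
t(M_{\underline{u}}) = E \oplus F, \qquad E := \bigoplus_{i=1}^{r} E_{A}(A/\p_i)^{\ell_i(\underline{u})}, \qquad F := \bigoplus_{i=1}^{r} \bigoplus_{j=1}^{t_i(\underline{u})} A/\p_i^{\beta_{ij}(\underline{u})},
\]
where both sums are finite. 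The module $E$ is $A$-injective, so $\Ext^{1}_{A}(\overline{M_{\underline{u}}}, E) = 0$ automatically, and the task reduces to showing $\Ext^{1}_{A}(\overline{M_{\underline{u}}}, A/\p^{\beta}) = 0$ for each cyclic summand $A/\p^{\beta}$ of $F$.

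Here I would invoke the PID hypothesis to write $\p = (\pi)$ and use the projective resolution $0 \to A \xrightarrow{\cdot \pi^{\beta}} A \to A/\p^{\beta} \to 0$. Applying $\Hom_{A}(\overline{M_{\underline{u}}}, -)$, and using that $A$ has global dimension one so that $\Ext^{\geq 2}_{A}(-,-) = 0$, the long exact sequence collapses to the identification
\[
\Ext^{1}_{A}(\overline{M_{\underline{u}}}, A/\p^{\beta}) \;\cong\; \Ext^{1}_{A}(\overline{M_{\underline{u}}}, A)\bigm/\pi^{\beta}\Ext^{1}_{A}(\overline{M_{\underline{u}}}, A).
\]
Thus it is enough to show that $\Ext^{1}_{A}(\overline{M_{\underline{u}}}, A)$ is $\pi$-divisible (an easy induction then yields $\pi^{\beta}$-divisibility).

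For this last step, the crucial input is Proposition \ref{flat}: $\overline{M_{\underline{u}}}$ is $A$-flat, hence $A$-torsion-free, so multiplication by $\pi$ is injective on it, giving
\[
0 \to \overline{M_{\underline{u}}} \xrightarrow{\cdot \pi} \overline{M_{\underline{u}}} \to \overline{M_{\underline{u}}}/\pi \overline{M_{\underline{u}}} \to 0.
\]
Applying $\Hom_{A}(-, A)$ and again using that $A$ has global dimension one, the tail of the resulting long exact sequence reads
\[
\cdots \to \Ext^{1}_{A}(\overline{M_{\underline{u}}}, A) \xrightarrow{\cdot \pi} \Ext^{1}_{A}(\overline{M_{\underline{u}}}, A) \to \Ext^{2}_{A}(\overline{M_{\underline{u}}}/\pi\overline{M_{\underline{u}}}, A) = 0,
\]
delivering surjectivity of multiplication by $\pi$ and hence the required divisibility. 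The main delicate point I see is ensuring the decomposition of $t(M_{\underline{u}})$ involves only finitely many summands of each kind, so that $\Ext^{1}_{A}(\overline{M_{\underline{u}}}, F)$ commutes with the direct sum; this is exactly the content of Theorem \ref{struThm} together with the finiteness of $\mathcal{Z}$. The PID assumption is used only to write each $\p \in \mathcal{Z}$ as a principal ideal $(\pi)$ globally; a parallel Dedekind-domain version of the argument would require an extra localization step at each $\p$ to reduce to the DVR case.
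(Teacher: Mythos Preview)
Your proof is correct and shares the same opening reduction as the paper: decompose $t(M_{\underline{u}})$ via Proposition~\ref{tor_sub_fun} and Theorem~\ref{struThm}, discard the injective summands $E_A(A/\p_i)^{\ell_i}$, and reduce to showing $\Ext^1_A(\overline{M_{\underline{u}}}, A/\p^{\beta})=0$ with $\p=(\pi)$. From this point the two arguments diverge.

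The paper constructs an injective resolution $0 \to A/\p^{\beta} \to E_A(A/\p) \xrightarrow{\pi^{\beta}} E_A(A/\p) \to 0$, applies $\Hom_A(\overline{M_{\underline{u}}},-)$, and then uses the Pontrjagin-dual machinery: flatness of $\overline{M_{\underline{u}}}$ (Proposition~\ref{flat}) forces $\Hom_A(\overline{M_{\underline{u}}},K/A)$ to be injective (Lemma~\ref{PDualInj}), hence so is its direct summand $\Hom_A(\overline{M_{\underline{u}}},E_A(A/\p))$; decomposing this injective into indecomposables shows $\pi^{\beta}$ acts surjectively on it. You instead apply $\Hom_A(\overline{M_{\underline{u}}},-)$ to the projective resolution $0\to A\xrightarrow{\pi^{\beta}}A\to A/\p^{\beta}\to 0$, use $\Ext^2_A=0$ to identify the target as the cokernel of $\pi^{\beta}$ on $\Ext^1_A(\overline{M_{\underline{u}}},A)$, and prove $\pi$-divisibility of the latter by running the long exact sequence for $\Hom_A(-,A)$ on $0\to\overline{M_{\underline{u}}}\xrightarrow{\pi}\overline{M_{\underline{u}}}\to\overline{M_{\underline{u}}}/\pi\overline{M_{\underline{u}}}\to 0$ and invoking $\Ext^2_A=0$ once more. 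Your route is a bit more elementary---it needs only that a PID has global dimension one and avoids the Pontrjagin dual lemma and the Matlis decomposition of injectives---while the paper's approach makes the structure of $\Hom_A(\overline{M_{\underline{u}}},E_A(A/\p))$ explicit, which could be of independent use.
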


% https://mathoverflow.net/questions/271588/local-property-of-split-exact-sequence

To prove Proposition \ref{split}, we use a property of \emph{Pontrjagin dual}, which is defined and discussed for
% $\Z$,
the ring of integers in \cite[Definition 3.2.3, Proposition 3.2.4]{Wei}. Although the same
% arguments
treatment works for a PID, we repeat the proof here for the reader's convenience.

Suppose that $A$ is a PID. Then $A$ has an injective resolution
\[0 \to K \to K/A \to 0,\]
where $K$ denotes the quotient field of $A$. By \cite[Corollary 3.1.5]{BH}, both $K$ and $K/A$ are injective modules. Therefore,
\[K/A \cong \bigoplus_{\substack{\p \in \Spec(R)\\ \p \neq 0}} E_A(A/\p).\]
The Pontrjagin dual of an $A$-module $N$, denoted by $N^*$, is defined as
\[N^*:=\Hom_A(N, K/A).\]

\begin{lemma}\label{PDualInj}
 If $N$ is a flat $A$-module, then $N^*$ is an injective $A$-module.
\end{lemma}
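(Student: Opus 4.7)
The plan is to use the Hom-tensor adjunction together with the injectivity of $K/A$. Concretely, for any $A$-module $L$ there is a natural isomorphism
\[
\Hom_A\bigl(L,\Hom_A(N,K/A)\bigr) \;\cong\; \Hom_A\bigl(L\otimes_A N,\;K/A\bigr),
\]
and this isomorphism is functorial in $L$. Thus the functor $\Hom_A(-,N^*)$ factors as the composition of two functors $L \mapsto L\otimes_A N$ and $(-)\mapsto \Hom_A(-,K/A)$.

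To prove $N^*$ is injective, it suffices to show $\Hom_A(-,N^*)$ is exact on short exact sequences $0 \to L' \to L \to L'' \to 0$ of $A$-modules. First I would apply $-\otimes_A N$: since $N$ is flat by hypothesis, this yields the short exact sequence
\[
0 \to L'\otimes_A N \to L\otimes_A N \to L''\otimes_A N \to 0.
\]
Next I would apply $\Hom_A(-,K/A)$. Because $K$ and $K/A$ form an injective resolution of $A$ over the PID $A$, the module $K/A$ is injective (equivalently, it is a direct sum of modules $E_A(A/\mathfrak{p})$, each of which is injective). Hence $\Hom_A(-,K/A)$ is exact, yielding a short exact sequence. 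Composing with the adjunction isomorphism shows that $\Hom_A(-,N^*)$ takes short exact sequences to short exact sequences, i.e.\ $N^*$ is injective.

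There is no serious obstacle here; the only care needed is to record that $K/A$ is genuinely injective, which is already available from Matlis' structure theory as recalled just before the statement, and that the adjunction is natural so that exactness is preserved when we compose the two functors. If one prefers to invoke Baer's criterion, the same argument runs by restricting $L$ to modules of the form $A/\mathfrak{a}$ for ideals $\mathfrak{a}\subseteq A$, but the general argument above is equally short and avoids checking Baer's criterion explicitly.
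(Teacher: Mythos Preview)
Your proof is correct and follows essentially the same approach as the paper: both use the tensor--hom adjunction $\Hom_A(L,N^*)\cong\Hom_A(L\otimes_A N,K/A)$ together with the flatness of $N$ and the injectivity of $K/A$. The only cosmetic difference is that the paper invokes Baer's criterion (restricting to inclusions $I\hookrightarrow A$ of ideals), whereas you verify exactness of $\Hom_A(-,N^*)$ on arbitrary short exact sequences; you even note this alternative yourself.
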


\begin{proof}
 Let $M$ be a $A$-module. By the \emph{tensor-hom adjunction},
 \[\Hom_A(M, N^*)=\Hom_A(M, \Hom_A(N, K/A)) \cong \Hom_A(M \otimes_A N, K/A)=(M \otimes_A N)^*.\]
By \emph{Baer's criterion} (see \cite[Proposition 3.1.2 d)]{BH}), $N^*$ is an injective module if and only if for any ideal $I$ in $A$ the map
\[\Hom_A(A, N^*) \to \Hom_A(I, N^*),\]
i.e., $(A \otimes_A N)^*\to (I \otimes_A N)^*$ is a surjective map. Since $K/A$ is an injective $A$-module, in view of \cite[Proposition 3.1.2]{BH}, it is enough to show that $I \otimes_A N \to A \otimes_A N$ is an injective map. The latter condition holds if and only if $N$ is a flat $A$-module, see \cite[Theorem 7.7]{Mat}.
\end{proof}

\begin{proof}[Proof of Proposition \ref{split}]
Fix $\underline{u} \in \Z^n$. Using Theorem \ref{struThm}, we get the short exact sequence
\[0 \to \bigoplus_{i=1}^r \left(E_A(A/\p_i)^{\ell_i} \oplus \bigg(\bigoplus_{j=0}^{t_i} \frac{A}{\p_i^{\beta_{ij}}}\bigg)\right) \to M_{\underline{u}} \to \overline{M_{\underline{u}}} \to 0\]
for some finite $\ell_i$ and $\beta_{ij}$. Since $\Ext^1_A\left(\overline{M_{\underline{u}}}, E_A(A/\p_i)^{\ell_i}\right)=0$, we now show that
\[\Ext^1_A(\overline{M_{\underline{u}}}, A/\p_i^{\beta_{ij}})=0.\]
Fix $\p \in \mathcal{Z}$. Since $A$ is a PID, $\p=(\pi)$ for some nonzero element $\pi$ in $A$. Consider the minimal free resolution
\begin{equation}\label{minReso}
 0 \to A \xrightarrow{\pi^{\beta}} A \to A/\p^{\beta} \to 0
\end{equation}
of $A/\p^{\beta}$. Let $\widehat{(-)}$ denote the completion of any $A$-module with respect to the maximal ideal $\p$ of $A$. As $A/\p^{\beta}$ is an Artinian $A$-module  with $\Ass_A A/\p^{\beta}=\{\p\}$ so we have $\widehat{A/\p^{\beta}} \cong A/\p^{\beta}$. Observe that
\[\Hom_A(A/\p^{\beta}, E_A(A/\p)) \cong E_{A/\p^{\beta}}(A/\p) \cong A/\p^{\beta},\]
since $A/\p^{\beta} \cong A/(\pi^{\beta})$ is Gorenstein and Artinian. Applying $\Hom_A(-, E_A(A/\p))$ to \eqref{minReso}, we get the short exact sequence
\begin{equation}\label{injReso}
0 \to A/\p^{\beta} \to E_A(A/\p) \xrightarrow{\cdot \pi^{\beta}} E_A(A/\p) \to 0.
\end{equation}
As $E_A(A/\p)$ is an indecomposable injective $A$-modules, it follows that
\begin{equation}\label{inj}
 0 \to E_A(A/\p) \xrightarrow{\pi^{\beta}} E_A(A/\p) \to 0
\end{equation}
is an injective resolution of $A/\p^{\beta}$ as an $A$-module. Applying $\Hom_A(\overline{M_{\underline{u}}}, -)$ to \eqref{inj}, we get the complex
\begin{equation}\label{hom-inj}
0\to \Hom_A(\overline{M_{\underline{u}}}, E_A(A/\p)) \xrightarrow{\pi^{\beta}}\Hom_A(\overline{M_{\underline{u}}}, E_A(A/\p)) \to 0.
\end{equation}
% [Proposition 2.9 (ii), Atiyah]
From Proposition \ref{flat} we have $\overline{M_{\underline{u}}}$ is a flat $A$-module. So the \emph{Pontrjagin dual}
\[\overline{M_{\underline{u}}}^*:=\Hom_A(\overline{M_{\underline{u}}}, K/A)=\Hom_A\bigg(\overline{M_{\underline{u}}}, \bigoplus_{\substack{\q \in \Spec(R)\\ \q \neq 0}} E_A(A/\q)\bigg) \] is an injective $A$-module by Lemma \ref{PDualInj}.
% https://math.stackexchange.com/questions/1001412/direct-summand-of-an-injective-module-is-injective
Since $E_A(A/\p)$ is a direct summand of $K/A$ so $\Hom_A(\overline{M_{\underline{u}}}, E_A(A/\p))$ is a direct summand of $\overline{M_{\underline{u}}}^*$. Thus  $\Hom_A(\overline{M_{\underline{u}}}, E_A(A/\p))$ is an injective $A$-module. Hence by Matlis theory, \[\Hom_A(\overline{N_{\underline{u}}}, E_A(A/\p)) \cong E_A(A/\p)^w \oplus \left( \bigoplus_{\substack{\q \in \Spec(R)\\ \q \neq \p}} E_A(A/\q)^{v(\q)} \right)\]
for some $v(\q), w$, see \cite[Proposition 3.2.4]{Wei}. Observe that the multiplication map by $\pi^{\beta}$ in \eqref{hom-inj} is an isomorphism on $E_A(A/\q)$ for every prime ideal $\q \neq \p$ and is an onto map on $E_A(A/\p)$ by \eqref{injReso}. It therefore follows that $\Ext^1_A(\overline{N_{\underline{u}}},A/\p^{\beta})=0$.
\end{proof}

As a sequel to Proposition \ref{split} we obtain the following.
\begin{corollary}\label{split_local}
 The induced short exact sequence
 \[0 \to t\left(M_{\underline{u}}\right)_{\p} \to \left(M_{\underline{u}}\right)_{\p} \to \left(\overline{M_{\underline{u}}}\right)_\p \to 0\]
 splits for all prime ideal $\p$ in $A$.
\end{corollary}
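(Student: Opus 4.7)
The plan is to reduce Corollary \ref{split_local} to Proposition \ref{split} by localizing the entire setup at $\p$, using flat base change to interpret the localized module as a graded component of a local cohomology module over a PID.

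First, I would dispose of the trivial case $\p=0$. Then $A_\p=K$ is the fraction field, and since $A$ is a domain every element of $t(M_{\underline{u}})$ is annihilated by some nonzero element of $A$, so $t(M_{\underline{u}})_{0}=0$ and the localized sequence degenerates to an isomorphism, which splits trivially.

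For $\p\neq 0$, the ring $A_\p$ is a DVR, hence a PID, and it continues to satisfy the standing hypothesis \ref{sai}: it has characteristic zero, it is Dedekind, and its localization at its unique maximal ideal $\p A_\p$ (which equals $A_\p$ itself) has mixed characteristic with finite residue field, inherited from $A$. Setting $R':=A_\p[X_1,\ldots,X_n]$ and $I':=IR'$, the ideal $I'$ is again a $\mathfrak{C}$-monomial ideal in $R'$. Since $R\hookrightarrow R'$ is flat, the flat base change theorem gives $H^i_{I'}(R')\cong H^i_I(R)\otimes_A A_\p$; taking $\Z^n$-graded components yields $H^i_{I'}(R')_{\underline{u}}\cong (M_{\underline{u}})_\p$ for every $\underline{u}\in\Z^n$.

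Next, I would verify that torsion commutes with the flat base change to $A_\p$. Because $A$ is a domain, the torsion submodule of any $A$-module $N$ equals $\ker(N\to N\otimes_A K)$; localizing this kernel at $\p$ (an exact operation) and using $A_\p\otimes_A K\cong K$ identifies $(t(N))_\p$ with $t(N_\p)$ as $A_\p$-modules. Applied to $N=M_{\underline{u}}$, this shows that the sequence
\[0\to t(M_{\underline{u}})_\p\to (M_{\underline{u}})_\p\to \overline{M_{\underline{u}}}{}_\p\to 0\]
coincides, under the isomorphism above, with the canonical torsion/torsion-free sequence \eqref{eq_ses} for the module $H^i_{I'}(R')_{\underline{u}}$ over the PID $A_\p$.

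Finally, Proposition \ref{split} applied over $A_\p$ yields that this latter sequence splits as a sequence of $A_\p$-modules, completing the proof. The only step requiring any care is the compatibility of torsion with localization, which only needs $A$ to be a domain, and the verification that $A_\p$ inherits the standard hypothesis so that Proposition \ref{split} is legitimately applicable; both are routine.
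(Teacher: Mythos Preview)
Your proof is correct and follows essentially the same approach as the paper: dispose of $\p=0$ trivially, then for $\p\neq 0$ localize the whole setup at $\p$, invoke flat base change to identify $(M_{\underline{u}})_\p$ with a graded component of $H^i_{IR'}(R')$ over the PID $A_\p$, and apply Proposition~\ref{split}. Your version is in fact slightly more careful than the paper's, since you explicitly check that $A_\p$ inherits the standing hypotheses and that torsion commutes with localization, points the paper leaves implicit.
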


\begin{proof}
The statement is trivial at $\p=0$.
% as $t\left(M_{\underline{u}}\right)_{\p}=0$.
Therefore, we take a nonzero prime ideal $\p$ of $A$ and fix it. We set $W=A \backslash \p$. Clearly, $W^{-1}R \cong A_\p[X_1, \ldots, X_n]$. Since $A_\p$ is a DVR, it is a PID. We set $N:=M \otimes_A A_\p \cong H^i_{W^{-1}I}(W^{-1}R)$. Observe that $N_{\underline{u}}=M_{\underline{u}} \otimes_A A_\p \cong (M_{\underline{u}})_\p$ for all $\underline{u} \in \Z^n$. Hence the result follows from Proposition \ref{split}.
\end{proof}

\begin{remark}
 %Since $t\left(M_{\underline{u}}\right)$ may contain $E(A/\p)$ as direct summands for some $\p \in \mathcal{Z}$, it need not be finitely generated (see Example \ref{eg1}).
 Note that $\overline{M_{\underline{u}}}$ is not finitely generated over $A_\p$. Otherwise, if $\overline{M_{\underline{u}}}$ is finitely generated, then $\overline{N_{\underline{u}}}$ in Theorem \ref{struThm} is finitely generated. But $K_\p$ can occur as a direct summand of $\overline{N_{\underline{u}}}$, which is not finitely generated as a $T$-module. Therefore, $\Ext^1(\overline{M_{\underline{u}}}, t\left(M_{\underline{u}}\right))$ may not commute with localization. So \eqref{eq_ses} not necessarily splits for arbitrary $\mathfrak{C}$-monomial ideal $I$ despite Proposition \ref{split}.
\end{remark}

As a consequence of the Theorem \ref{struThm}, we
% can compute
get finiteness of the Bass numbers of the graded components. In fact, we can explicitly compute the Bass numbers.

\begin{corollary}[Bass numbers]\label{bass}
% Let $\mathcal{Z}$ be as in \eqref{assPrime}. Then
Fix $\underline{u} \in \Z^n$. The Bass numbers $\mu_j(\q, M_{\underline{u}})$ of $M_{\underline{u}}$ are finite for each prime ideal $\q$ in $A$ and every $j \geq 0$.
\end{corollary}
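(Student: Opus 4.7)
The plan is to compute Bass numbers prime-by-prime using the local splitting from Corollary \ref{split_local}: by definition $\mu_j(\q, M_{\underline{u}}) = \dim_{k(\q)} \Ext^j_{A_\q}(k(\q), (M_{\underline{u}})_\q)$. When $\q = 0$, Theorem \ref{terai} gives $(M_{\underline{u}})_{(0)} \cong K^{\alpha(\underline{u})}$ directly, so $\mu_0 = \alpha(\underline{u}) < \infty$ and all higher Bass numbers vanish. For a nonzero prime $\q$, the injective dimension of any $A_\q$-module is at most one (since $A_\q$ is a DVR), so only $\mu_0$ and $\mu_1$ can be nonzero, and Corollary \ref{split_local} lets us split $(M_{\underline{u}})_\q \cong t(M_{\underline{u}})_\q \oplus (\overline{M_{\underline{u}}})_\q$ and handle the two summands separately.

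For the torsion summand, Proposition \ref{tor_sub_fun} decomposes $t(M_{\underline{u}}) = \bigoplus_{\p \in \mathcal{Z}} \Gamma_\p(M_{\underline{u}})$, and since any two distinct nonzero primes of the one-dimensional Dedekind domain $A$ are coprime, $\Gamma_\p(M_{\underline{u}})_\q = 0$ for $\p \neq \q$; only the $\p = \q$ piece survives (and the whole torsion part vanishes if $\q \notin \mathcal{Z}$). By Theorem \ref{struThm}, that piece is a finite direct sum
\[
\Gamma_\q(M_{\underline{u}})_\q \cong E_{A_\q}(k(\q))^{\ell(\underline{u})} \oplus \bigoplus_{j=1}^{t(\underline{u})} A_\q/(\q A_\q)^{\beta_j(\underline{u})}
\]
of indecomposables with known, finite Bass numbers: the injective hull contributes $\mu_0 = 1$ and nothing else, while each Gorenstein Artinian quotient $A_\q/(\q A_\q)^\beta$ has $\mu_0 = \mu_1 = 1$ (computable directly from the free resolution $0 \to A_\q \xrightarrow{\pi^\beta} A_\q \to A_\q/(\q A_\q)^\beta \to 0$).

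For the flat summand, $(\overline{M_{\underline{u}}})_\q$ is flat over the DVR $A_\q$ by Proposition \ref{flat}, hence torsion-free, so $\mu_0 = \dim_{k(\q)} \Hom_{A_\q}(k(\q), (\overline{M_{\underline{u}}})_\q) = 0$ immediately. The remaining number is $\mu_1 = \dim_{k(\q)}\bigl((\overline{M_{\underline{u}}})_\q/\pi(\overline{M_{\underline{u}}})_\q\bigr)$, where $\pi$ is a uniformizer of $\q A_\q$. The main obstacle is that $(\overline{M_{\underline{u}}})_\q$ need not be finitely generated over $A_\q$, so finiteness here is not automatic. The plan is to pass to the completion $T = \widehat{A_\q}$: using that $\Gamma_\q$ is insensitive to completion (Remark \ref{tor-comp-rel}), one checks
\[
(\overline{M_{\underline{u}}})_\q \otimes_{A_\q} T \;\cong\; N_{\underline{u}}/\Gamma_\q(N_{\underline{u}}) \;=\; \overline{N_{\underline{u}}} \;\cong\; T^{a(\underline{u})} \oplus K_\q^{b(\underline{u})}
\]
by Theorem \ref{struThm}. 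Reducing mod $\pi$ annihilates the $K_\q$-factor and yields $k(\q)^{a(\underline{u})}$; since $k(\q) \otimes_{A_\q} T = T/\pi T = k(\q)$, any $k(\q)$-vector space $V$ satisfies $V \otimes_{A_\q} T \cong V$, so $\mu_1 = a(\underline{u}) < \infty$. Combining the two summand contributions gives explicit finite formulas for every $\mu_j(\q, M_{\underline{u}})$.
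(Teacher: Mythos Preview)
Your proof is correct and follows essentially the same strategy as the paper: both arguments use the local splitting of Corollary~\ref{split_local} together with the explicit structure of $\Gamma_\q(N_{\underline{u}})$ and $\overline{N_{\underline{u}}}$ from Theorem~\ref{struThm} to read off finite Bass numbers. The only cosmetic difference is that the paper passes to the completion $T=\widehat{A_\q}$ at the outset (invoking \cite[2.1.1]{Lyu2000} for $\mu_j(\q,M_{\underline{u}})=\mu_j(\q T,N_{\underline{u}})$) and then appeals to the structure theorem directly, whereas you work over $A_\q$ and complete only to compute $\mu_1$ of the torsion-free summand; you also treat the case $\q=0$ explicitly via Theorem~\ref{terai}, which the paper leaves implicit.
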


\begin{proof}
Since $A_\p$ is regular, for any $A$-module $L$, prime ideal $\p$ in $A$ and $j \geq 0$,
\begin{equation}\label{bassCompletion}
\mu_j(\p, L)
% =\mu_j(\p A_\p, L_\p)
=\mu_j(\widehat{\p A_\p}, \widehat{L_\p})
\end{equation}
% The second equality was shown in
by \cite[2.1.1]{Lyu2000}.
% Fix $j \geq 0$ and prime ideal $\p$ in $\mathcal{Z}$.
Fix a prime ideal $\p$. We set
% $B:=A_\p$,
$T:=\widehat{A_\p}$ and put $N:=M \otimes_A T$. Clearly, $N_{\underline{u}}=M_{\underline{u}}\otimes_A T=\widehat{\left(M_{\underline{u}}\right)_\p}$ for all $\underline{u} \in \Z^n$. Denote $\n:= \p T=\widehat{\p A_\p}=(\pi)$. Then
\[\mu_j(\p, M_{\underline{u}})=\mu_j(\n, N_{\underline{u}})=\dim_{k(\p)} \Ext^j_{T}\left(k(\p), N_{\underline{u}}\right),\]
where $k(\p):=T/\n= \widehat{\frac{A_\p}{\p A_\p}} \cong A/\p$. Further, observe that the Koszul complex
\[0 \to T \xrightarrow{\cdot \pi} T \to T/(\pi):=k(\p) \to 0\]
is a minimal free resolution of $k(\p)$. So $\mu_j(\n, N_{\underline{u}})=0$ for $j \geq 2$. By Corollary \ref{split_local}, $N_{\underline{u}} \cong \Gamma_\p\left(N_{\underline{u}}\right) \oplus \overline{N_{\underline{u}}}$ and hence
\begin{equation}\label{ext_rel}
 \Ext^j_{T}\left(k(\p), N_{\underline{u}}\right)= \Ext^j_{T}\left(k(\p),\Gamma_\p\left(N_{\underline{u}}\right) \oplus \overline{N_{\underline{u}}}\right).
% =\Ext^i_{A_{\p}}\left(k(\p),\Gamma_{\p}\left(M_{\underline{u}}\right)\right).
\end{equation}
for every $j \geq 0$.
% Recall $\mathcal{Z}$, defined in \eqref{assPrime}.
Moreover, from Theorem \ref{struThm} we have
\begin{align*}
 &\Gamma_\p\left(N_{\underline{u}}\right)=E_T(k(\p))^{\ell(\underline{u})} \oplus \left( \bigoplus_{j=1}^{t(\underline{u})} T/\p^{\beta_j(\underline{u})}T \right) \quad \mbox{ for } \p \in \mathcal{Z},
\mbox{ defined in \eqref{assPrime}}\\
&\overline{N_{\underline{u}}}=T^{a(\underline{u})} \oplus K_\p^{b(\underline{u})} \hspace{4.5cm} \mbox{ for } \p \in \Spec(A) \backslash \{0\},
\end{align*}
where $\ell(\underline{u}), t(\underline{u}), a(\underline{u}), b(\underline{u}) \geq 0$ are finite. The result follows.
 \end{proof}

% https://math.stackexchange.com/questions/4349591/showing-that-mathrmhom-rr-i-m-cong-mathrmann-im?noredirect=1&lq=1

\s By a \emph{block} $\mathcal{B}(n)$, we mean that there is a subset $U$ of $\{1, \ldots, n\}$ such that
\[\mathcal{B}(n) = \{\underline{u} \in \Z^n \mid u_i \geq 0 \mbox{ if } i \in U \mbox{ and } u_i \leq -1 \mbox{ otherwise}\}.\]

\begin{corollary}[Tameness]
Fix $\underline{u} \in \Z^n$. Suppose that $M_{\underline{u}} \neq 0$ and it has a torsion-free element. Then there is a block $\mathcal{B}(n)$ such that $M_{\underline{v}} \neq 0$ and it has a torsion-free element for all $\underline{v} \in \mathcal{B}(n)$.
\end{corollary}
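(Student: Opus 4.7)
The plan is to translate the hypothesis ``$M_{\underline{u}}$ has a torsion-free element'' into the nonvanishing of $M_{\underline{u}} \otimes_A K$, and then invoke the periodicity isomorphism \eqref{block_rel} that was established in the proof of Theorem \ref{terai}. That isomorphism already delivers block-wise constancy of the function $\underline{u} \mapsto \alpha(\underline{u})$, so once the translation is in place, the corollary becomes a one-line consequence.

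First I would establish the elementary equivalence: for each $\underline{u} \in \Z^n$, the component $M_{\underline{u}}$ contains a torsion-free element if and only if $\alpha(\underline{u}) > 0$. The direction ($\Rightarrow$) holds because a torsion-free element $m \in M_{\underline{u}}$ has nonzero image $m \otimes 1$ in $M_{\underline{u}} \otimes_A K$, since it is not annihilated by any nonzero element of $A$. For ($\Leftarrow$), the canonical surjection $M_{\underline{u}} \twoheadrightarrow \overline{M_{\underline{u}}}$ composed with the embedding $\overline{M_{\underline{u}}} \hookrightarrow \overline{M_{\underline{u}}} \otimes_A K = M_{\underline{u}} \otimes_A K$ (injective because $\overline{M_{\underline{u}}}$ is torsion-free over the domain $A$) shows that $M_{\underline{u}} \otimes_A K \neq 0$ forces $\overline{M_{\underline{u}}} \neq 0$; any lift of a nonzero class to $M_{\underline{u}}$ is then torsion-free.

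Next, I would define the block $\mathcal{B}(n)$ containing $\underline{u}$ by taking $U = \{i : u_i \geq 0\}$, and let $\underline{a} \in \{0,-1\}^n$ denote its distinguished representative (with $a_i = 0$ if $i \in U$ and $a_i = -1$ otherwise). For any $\underline{v} \in \mathcal{B}(n)$, two applications of \eqref{block_rel} in $V = K[X_1,\ldots,X_n]$ (in which $IV$ is a usual monomial ideal since every $a_j$ becomes a unit in $K$) give
$$M_{\underline{v}} \otimes_A K \;\cong\; H^i_{IV}(V)_{\underline{v}} \;\cong\; H^i_{IV}(V)_{\underline{a}} \;\cong\; M_{\underline{u}} \otimes_A K,$$
so $\alpha(\underline{v}) = \alpha(\underline{u})$. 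By hypothesis and the equivalence from the preceding paragraph, $\alpha(\underline{u}) > 0$, hence $M_{\underline{v}}$ contains a torsion-free element, and is in particular nonzero, for every $\underline{v} \in \mathcal{B}(n)$.

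No step here poses a serious obstacle; the only point requiring a moment's thought is the ($\Leftarrow$) direction of the torsion-free equivalence, which ultimately amounts to flatness of $A \to K$ combined with the definition of $\overline{M_{\underline{u}}}$. Everything else is a direct appeal to Theorem \ref{terai} and the periodicity isomorphism \eqref{block_rel} already proved in the excerpt.
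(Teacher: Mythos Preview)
Your proof is correct and follows essentially the same route as the paper: tensor with $K$ to pass to $H^i_{IV}(V)$ with $V=K[X_1,\ldots,X_n]$, then use the block periodicity \eqref{block_rel} from Terai's formula. The paper additionally mentions the $\mathcal{D}(V,K)$-module/generalized Eulerian structure, but this is not actually needed beyond what \eqref{block_rel} already provides; your version simply omits that detour and spells out the equivalence between ``has a torsion-free element'' and $\alpha(\underline{u})>0$ more carefully.
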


\begin{proof}
In view of \cite[Lemma 2.1]{BBLSZ}, $N:=M \otimes_A K$ is $\mathcal{D}(R \otimes_A K, K)$ module. We set $V:=R \otimes_A K \cong K[X_1, \ldots, X_n]$. Then $N \cong H^i_{IV}(V)$. If $M_{\underline{u}}$ has a torsion-free element, then $N_{\underline{u}} \neq 0$. Since $N$ is an $\Z^n$-graded generalized Eulerian $\D(V, K)$-module, by Terai's result \cite{Ter}, there is a block $\mathcal{B}(n)$ by \eqref{block_rel} such that $N_{\underline{v}} \neq 0$ and hence $M_{\underline{v}} \neq 0$. Moreover, $M_{\underline{v}}$ has a torsion-free element. The result follows.
\end{proof}

\begin{note*}
 From the above result, we get that if $0 \in \Ass_A M_{\underline{u}}$ for some $\underline{u} \in \Z^n$, then there is a block $\mathcal{B}(n)$ such that $0\in \Ass_A M_{\underline{v}}$.
\end{note*}

\section{Examples}

In this section, we consider $A=\Z$. We first discuss an example where $\beta_j(\underline{u}) \neq 0$ in the expression of $t(M_{\underline{u}})$ as in Theorem \ref{struThm} for some $\underline{u} \in \Z^n$.

\begin{example}\label{eg1}
 Suppose that $R=\Z[X]$ and $I=(2X)$. Then it is easily seen that $(2X)=(2) \cap (X)$. For instance, $(2X) \subseteq (2) \cap (X)$. Now take $f \in (2) \cap (X)$. Then $f=g(X) X$ and $2 \mid g(X) X$. Since $R$ is a UFD and $\gcd(2, X)=1$, we get $2 \mid g(X)$. Hence $f \in (2X)$. Now consider the Mayer-Vietoris sequence
 \[\cdots \to H^1_{(2, X)}(R) \to H^1_{(2)}(R) \oplus H^1_{(X)}(R) \to H^1_{(2X)}(R) \to H^2_{(2, X)}(R) \to 0.\]
 As $\hgt (2, X)=2$ so we have $H^1_{(2, X)}(R)=0$. Further notice that
 \[H^1_{(2)}(R) \cong H^1_{(2)}(\Z)[X]\cong E_\Z(\Z/2 \Z)[X] \quad \mbox{ and } \quad H^1_{(X)}(R) \cong \Z[X^{-1}](-1).\]
 Hence
 \[H^1_{(2X)}(R)_u \supseteq
 \begin{cases}
  E_\Z(\Z/2 \Z) \quad \mbox{ if } u \geq 0\\
  \Z  \quad \mbox{ otherwise }.
 \end{cases}
\]
\end{example}

We now give an example where the torsion-free part $\overline{M_{\underline{u}}}$ is not finitely generated for some $\underline{u} \in \Z^n$.

\begin{example}
 Suppose that $R=\Z[X, Y]$ and $I=(2X, Y)$. We set $\overline{R}=\frac{\Z}{2\Z}[X, Y]$. Consider the short exact sequence
 \[0 \to R \xrightarrow{\cdot 2} R \to\overline{R} \to 0,\]
 which induces a long exact sequence
 \[\cdots \to H^1_{(Y)}(\overline{R}) \to H^2_{(2X, Y)}(R) \xrightarrow{\cdot 2} H^2_{(2X, Y)}(R) \to H^2_{(Y)}(\overline{R}) \to \cdots.\]
 Observe that $H^2_{(Y)}(\overline{R})=0$ as $\hgt (Y)=1$ and $H^1_{(Y)}(\overline{R})_{(u_1, u_2)}$ is a finite dimensional vector space over $\Z/2 \Z$ for each pair ${(u_1, u_2)} \in \Z^2$. We put $M:= H^2_{(2X, Y)}(R)$. Then we have $M_{(u_1, u_2)} \xrightarrow{\cdot 2} M_{(u_1, u_2)} \to 0$ for all ${(u_1, u_2)} \in \Z^2$. Moreover, from the commutative diagram
 \[
\xymatrix@C=0.75em@R=0.75em{
% & & & 0 \ar[d] & \\
	M_{(u_1, u_2)} \ar[r] \ar[d]^2 & \overline{M_{(u_1, u_2)}} \ar[d]^2 \ar[r] & 0\\
	M_{(u_1, u_2)} \ar[r] \ar[d]& \overline{M_{(u_1, u_2)}} \ar[r] & 0\\
	0 & &
	}\]
and using the snake lemma, we get that $\overline{M_{(u_1, u_2)}} \xrightarrow{\cdot 2} \overline{M_{(u_1, u_2)}} \to 0$ for all ${(u_1, u_2)} \in \Z^2$. Recall that $\overline{M_{(u_1, u_2)}}$ is a torsion-free $\Z$-module. If it is finitely generated, then it is a free $\Z$-module. But in that case the surjective map $\overline{M_{(u_1, u_2)}} \xrightarrow{\cdot 2} \overline{M_{(u_1, u_2)}}$ is an isomorphism, which leads to a contradiction if $\overline{M_{(u_1, u_2)}}  \neq 0$. We note that
$$\overline{M}\otimes_{\Z} \mathbb{Q}  = M \otimes_{\Z} \mathbb{Q} = H^2_{(X, Y)}(\mathbb{Q}[X,Y]) = \mathbb{Q}[X^{-1}, Y^{-1}](-1, -1).$$
\end{example}

Right now, we do not have any answer to the following problems.
\begin{question}
Does there exist some $R$ and $\mathfrak{C}$- monomial $I \subseteq R$ such that both $a(\underline{u})$ and $b(\underline{u})$ in Theorem \ref{struThm} are nonzero?
\end{question}

\begin{question}
Does there exist any Dedekind domain $A$ and a $\mathfrak{C}$-monomial ideal $I \subseteq R$ such that the short exact sequence \eqref{eq_ses} does not split for some $\underline{u}$?
\end{question}


\begin{thebibliography}{15}


\bibitem{Mon_LC}
J. \`Alvarez Montaner,
\emph{Local Cohomology Modules Supported on Monomial Ideals}, 2013, In: Bigatti A., Gimenez P., Sáenz-de-Cabezón E. (eds) Monomial Ideals, Computations and Applications. Lecture Notes in Mathematics, vol 2083. Springer, Berlin, Heidelberg.
% https://doi.org/10.1007/978-3-642-38742-5_5

	\bibitem{BBLSZ}
	B. Bhatt, M. Blickle, G. Lyubeznik, A. K. Singh, and W. Zhang,
	\emph{Local cohomology modules of a smooth $\Z$-algebra have finitely many associated primes},
	Invent. math., Vol. 197 (2014), 509--519.
	
	\bibitem{Brod1}
	M. P. Brodmann,
	\emph{Asymptotic behaviour of cohomology: tameness, supports and associated primes},
	Joint International Meeting of the
	American Mathematical Society and the Indian Mathematical Society on Commutative Algebra and Algebraic Geometry, Bangalore, India,
	December 17--20, 2003, Contemp. Math., Vol. 390 (2005), 31--61.
	

	\bibitem{BH}
    W.~Bruns and J.~Herzog,
	\emph{{Cohen-Macaulay rings}},
	Cambridge Studies in Advanced Mathematics, Vol. 39, Cambridge University Press,~Cambridge, 1993.
% 	
 \bibitem{HerHib}
 J. Herzog and T. Hibi,
 \emph{Monomial ideals}, Springer, London, 2011.

	\bibitem{HunSha}
	C. Huneke and R. Y.Sharp,
	\emph{Bass numbers of local cohomology modules},
	Trans. Amer. Math. Soc., Vol. 339 (1993), No. 2, 765--779.
	
	\bibitem{Lyu93}
	G. Lyubeznik,
	\emph{Finiteness properties of local cohomology modules {\rm (}an application of D-modules to commutative algebra{\rm )}}, Invent. Math., Vol. 113 (1993), No. 1, 41--55.

	
	\bibitem{Lyu2000}
	G. Lyubeznik,
	\emph{Finiteness properties of local cohomology modules for regular local rings of mixed characteristic: the unramified case,}
	Comm. Algebra, Vol. 28 (2000), No. 12, 5867--5882.

\bibitem{Matlis}
E. Matlis,
\emph{Injective modules over Noetherian rings},
Pac. J. Math., Vol. 8 (1958), No. 3, 511--528.
	
	\bibitem{Mat}
	H. Matsumura,
	\emph{Commutative Ring Theory},
	second edition, Cambridge Stud. Adv. Math., Vol. 8, Cambridge University Press, Cambridge, 1989.
	
	\bibitem{MoRoSa}
	W. F. Moore, M. Rogers and S. Sather-Wagstaff,
	\emph{Monomial ideals and their decompositions},
	Springer International Publishing, 2018.
	
	\bibitem{MM}	
	M. Musta\c{t}\u{a},
	\emph{Local Cohomology at Monomial Ideals},
	J. Symbolic Comput., Vol. 29 (2000), 709--720.	

		
	\bibitem{Bet13}
	L. N\'u\~nez-Betancourt,
	\emph{On certain rings of differentiable type and finiteness properties of local cohomology},
	J. Algebra, Vol. 379 (2013), 1--10.


	\bibitem{TP2}
	T. J. Puthenpurakal,
% 	\emph{ Graded\- components\- of\- local\- cohomology\- modules},
\emph{Graded components of local cohomology modules}, Collect. Math., Vol. 73 (2022), 135--171

	\bibitem{TPSR1}
	T. J. Puthenpurakal and S. Roy,
	\emph{Graded components of local cohomology modules II}, Vietnam J. Math. (2022), https://doi.org/10.1007/s10013-022-00555-6.
	
	\bibitem{TPSR2}
	T. J. Puthenpurakal and S. Roy,
	\emph{Graded components of local cohomology modules of invariant rings}, Comm. Algebra, Vol. 48 (2020), No. 2, 803--814.


\bibitem{Ter}
 N. Terai,
\emph{Local cohomology modules with respect to monomial ideals},
Proceedings of the $20$th Symposium on Commutative Ring Theory (1998), 181--189.
	
\bibitem{Wei}
C. A. Weibel,
\emph{An introduction to homological algebra},
Cambridge Stud. Adv. Math., Vol. 38, Cambridge University Press, Cambridge, 1994.

\bibitem{KY2}
K. Yanagawa,
\emph{Bass numbers of local cohomology modules with supports in monomial ideals},
Math. Proc. Cambridge Philos. Soc., Vol. 131 (2001), 45--60.	

\end{thebibliography}
\end{document}